\newtheorem{thm}{Theorem}[section]
\newtheorem{prop}[thm]{Proposition}
\newtheorem{cor}[thm]{Corollary}
\newtheorem{lem}[thm]{Lemma}
\theoremstyle{definition}
\theoremstyle{remark}
\newcommand{\Z}{\mathbb{Z}}
\newcommand{\Q}{\mathbb{Q}}
\newcommand{\R}{\mathbb{R}}
\newcommand{\C}{\mathbb{C}}
\newcommand{\A}{\mathbb{A}}
\newcommand{\GL}{\mathrm{GL}}
\renewcommand{\Re}{\mathrm{Re}}
\newcommand{\Ind}{\mathrm{Ind}}
\newcommand{\G}{\ifmmode {\mathcal{G}}\else${\mathcal{G}}$\ \fi}
\author{Yubo Jin}
\address{Institute for Advanced Study in Mathematics\\ Zhejiang University\\ Hangzhou, 310058, China}
\email{yubo.jin@durham.ac.uk}
\author{Pan Yan}
\address{Department of Mathematics, The University of Arizona, Tucson, AZ 85721, USA}
\email{panyan@math.arizona.edu}
\date{\today}
\title{Cohomology classes, periods, and special values of Rankin-Selberg $L$-functions}
\subjclass[2020]{Primary 11F67; Secondary 11F70, 11F75, 22E55}
\keywords{Critical values, algebraicity, cohomology of arithmetic groups, Rankin-Selberg integrals}
\begin{document}

\begin{abstract}
In this article, we give a cohomological interpretation of (a special case of) the integrals constructed by the second named author and Q. Zhang \cite{YanZhang2023} which represent the product of Rankin-Selberg $L$-functions of $\mathrm{GL}_n\times\mathrm{GL}_m$ and $\mathrm{GL}_n\times\mathrm{GL}_{n-m-1}$ for $m<n$. As an application, we prove an algebraicity result for the special values of certain $L$-functions. This work is a generalization of the algebraicity result of Raghuram for $\mathrm{GL}_n\times\mathrm{GL}_{n-1}$ \cite{Raghuram2010} in the special case $m=n-1$, and the results of Mahnkopf \cite{Mahnkopf1998, Mahnkopf2005} in the special case $m=n-2$.
\end{abstract}

\setcounter{tocdepth}{1}

\maketitle
\tableofcontents

\section{Introduction}

The study of special values of $L$-functions is one of the central problems in number theory. In his celebrated paper \cite{Del79}, Deligne conjectures that the critical values of motivic $L$-functions, up to certain periods, are rational numbers. The main theme of this paper is the automorphic counterpart of his conjecture. That is, we study the algebraicity of special $L$-values for automorphic $L$-functions. 

The starting point of our research is Shimura's work on the special $L$-values for modular forms \cite{Sh76, Sh77} (and Hilbert modular forms \cite{Sh78}). See also \cite{Raghuram2011} whose setup and treatment are closer to the present paper. Later, algebraicity results for the critical values of Rankin-Selberg $L$-functions for $\mathrm{GL}_n\times\mathrm{GL}_{n-1}$ are obtained in \cite{KastenSchmidt, KazhdanMazurSchmidt, Raghuram2010, Raghuram2016}. Their works are based on the Rankin-Selberg convolution integrals developed by Jacquet--Piatetski-Shapiro--Shalika \cite{JPSS1983, JacquetShalika1981A, JacquetShalika1981B} (see also Cogdell's notes \cite{Cogdell2004Fields, Cogdell2007ParkCity} for a survey). More precisely, they consider the global integral of the form
\begin{equation}
\label{rankinselbergintegral}
\int_{\mathrm{GL}_{n-1}(F)\backslash\mathrm{GL}_{n-1}(\mathbb{A}_F)}\phi\left(\begin{bmatrix} g& \\ &1\end{bmatrix}\right)\phi'(g)|\det g|^{s-1/2}dg,
\end{equation}
where $\phi$ (resp. $\phi'$) is a cusp form in a cuspidal representation $\pi$ (resp. $\pi^\prime$) of $\mathrm{GL}_n(\mathbb{A}_F)$ (resp. $\mathrm{GL}_{n-1}(\mathbb{A}_F)$), which represents the Rankin-Selberg $L$-function $L(s,\pi\times\pi')$, and their algebraicity results are obtained by providing a cohomological interpretation of the integral \eqref{rankinselbergintegral}. 

In \cite{Mahnkopf1998}, Mahnkopf considers a variant of \eqref{rankinselbergintegral} when $n=3$, by replacing the cusp form $\phi^\prime$ with an Eisenstein series associated to a section of $\Ind_{B_2(\A_\Q)}^{\GL_2(\A_\Q)}(\chi_1\otimes\chi_2)$. Using a cohomological interpretation of the integral, he proves the algebraicity of $L(1,\pi\otimes\chi_1)/{\tilde{\Omega}(\pi_{\mathrm{f}})}$ for $\GL_3\times\GL_1$, where $\tilde{\Omega}(\pi_{\mathrm{f}})\in \mathbb{C}^\times$ denotes a certain period. Further developments are presented in Mahnkopf's subsequent work \cite{Mahnkopf2005}, where the algebraicity of the special value $L(0, \pi\times\chi)$ of $\GL_n\times\GL_1$, modulo certain period and Gauss sums, is obtained. The method he uses involves a variant of \eqref{rankinselbergintegral} using an Eisenstein series of $\GL_{n-1}$ associated to the standard parabolic subgroup of type $(n-2,1)$ as well as a cohomological interpretation of the integral.

Recently, a generalization of the Jacquet--Piatetski-Shapiro--Shalika's Rankin--Selberg convolution integrals is obtained in \cite{YanZhang2023}. To give more details, let $\pi$ (resp. $\tau_1$, $\tau_2$) be irreducible cuspidal representations of $\GL_n(\A_F)$ (resp. $\GL_m(\A_F)$, $\GL_k(\A_F)$), with $n>m+k$. Let $j$ be an integer such that $0\le j \le n-m-k-1$. The integrals considered in \cite{YanZhang2023} take the form
\begin{equation}
\label{eq-product-RS}
\int_{\GL_{m+k}(F)\backslash \GL_{m+k}(\A_F)}\phi^{\psi}_{Y_j}(\iota_j(h))E(h,f_{s_1, s_2, \tau_1, \tau_2})dh.
\end{equation}
Here, $\iota_j: \GL_{m+k}\to \GL_n$ is a certain embedding, $\phi\in \pi$ is a cusp form, $\phi^{\psi}_{Y_j}$ is a certain Fourier coefficient of $\phi$ along certain subgroup $Y_j\subset \GL_n$, and $E(h,f_{s_1, s_2, \tau_1, \tau_2})$ is the standard Eisenstein series on $\GL_{m+k}(\A_F)$ associated with a section $f_{s_1, s_2, \tau_1, \tau_2}$ in the representation induced from $\tau_1|\det|^{s_1-1/2}\otimes \tau_2|\det|^{-s_2+1/2}$ on the Levi subgroup of $\GL_{m+k}$ with partition $(m,k)$. This integral is Eulerian, and it represents
\begin{equation*}
\frac{L^S(s_1+\frac{n-m-k-1-2j}{2},\pi\times \tau_1)L^S(s_2-\frac{n-m-k-1-2j}{2},\widetilde\pi\times \widetilde\tau_2)}{L^S(s_1+s_2,\tau_1\times \widetilde \tau_2)}.
\end{equation*}
In the case $m=n-1$ and $k=0$, the integral \eqref{eq-product-RS} degenerates to the integral \eqref{rankinselbergintegral}.

The purpose of this paper is to provide a cohomological interpretation of the integral \eqref{eq-product-RS} in the special case $k=n-m-1$, and prove an algebraic result for the critical values of certain $L$-functions, generalizing the results of \cite{Mahnkopf1998, Mahnkopf2005, Raghuram2010}. We work exclusively over $F=\Q$ and let $\A=\A_\Q$. Our main result is the following.

\begin{thm}
\label{mainthm}
Let $\pi$ (resp. $\tau_1$, resp. $\tau_2$) be a regular algebraic cuspidal automorphic representation of $\mathrm{GL}_n(\mathbb{A})$ (resp. $\mathrm{GL}_m(\mathbb{A})$, resp. $\mathrm{GL}_k(\mathbb{A})$) with $n=m+k+1$ and $mk$ even. We assume $\tau_1,\tau_2$ have trivial central characters and certain conditions on the weights of $\pi,\tau_1,\tau_2$ (see the beginning of Section \ref{section-mainthm}). Then for any $\sigma\in\mathrm{Aut}(\C)$, we have
\begin{equation}
\label{maingaloisbehavior}
\begin{aligned}
&\sigma\left(\frac{L^S(\frac{1-k}{2},\pi\times\tau_1)L^S(\frac{1-m}{2},\widetilde{\pi}\times\widetilde{\tau_2})}{L^S(1+\frac{1-n}{2},\tau_1\times\widetilde{\tau_2})p^{\epsilon}(\pi)p^{\epsilon'}(\tau_1)p^{\epsilon'}(\tau_2)p_{\infty}(\pi,\tau_1,\tau_2)}\right)\\
=&\frac{L^S(\frac{1-k}{2},\pi^{\sigma}\times\tau^{\sigma}_1)L^S(\frac{1-m}{2},\widetilde{\pi}^{\sigma}\times\widetilde{\tau_2}^{\sigma})}{L^S(1+\frac{1-n}{2},\tau^{\sigma}_1\times\widetilde{\tau_2}^{\sigma})p^{\epsilon}(\pi^{\sigma})p^{\epsilon'}(\tau^{\sigma}_1)p^{\epsilon'}(\tau^{\sigma}_2)p_{\infty}(\pi,\tau_1,\tau_2)}.
\end{aligned}    
\end{equation}
In particular,
\begin{equation*}
\frac{L^S(\frac{1-k}{2},\pi\times\tau_1)L^S(\frac{1-m}{2},\widetilde{\pi}\times\widetilde{\tau_2})}{L^S(1+\frac{1-n}{2},\tau_1\times\widetilde{\tau_2})p^{\epsilon}(\pi)p^{\epsilon'}(\tau_1)p^{\epsilon'}(\tau_2)p_{\infty}(\pi,\tau_1,\tau_2)}\in\Q(\pi,\tau_1,\tau_2).
\end{equation*}
Here $p^{\epsilon}(\pi),p^{\epsilon'}(\tau_1),p^{\epsilon'}(\tau_2)$ are periods associated to cuspidal representations defined in Section \ref{cuspidalcohomology} and $p_{\infty}(\pi,\tau_1,\tau_2)$ is the archimedean period defined in \eqref{archimedeanperiod} where $\epsilon,\epsilon'$ are permissible signs satisfying $\epsilon=(-1)^n\epsilon'$ (uniquely determined by the parity of $m,k$).
\end{thm}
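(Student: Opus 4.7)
The plan is to follow the cohomological strategy pioneered by Harder and developed in the works of Raghuram and Mahnkopf cited above. The core idea is to reinterpret the global integral \eqref{eq-product-RS} in the special case $k=n-m-1$ as a Poincar\'e-type pairing on the locally symmetric space for $\GL_{n-1}(\A)$, between a cuspidal cohomology class attached to $\pi$ (restricted along $\iota_j$) and an Eisenstein cohomology class on $\GL_{n-1}$ attached to the inducing data $\tau_1 |\det|^{s_1-1/2}\otimes \tau_2|\det|^{-s_2+1/2}$. The hypothesis that $mk$ is even ensures that the minimal cohomological degrees for $\pi_\infty$, $\tau_{1,\infty}$, $\tau_{2,\infty}$ have compatible parities, so the cup product lands in the top degree for $\GL_{n-1}$; the sign constraint $\epsilon=(-1)^n\epsilon'$ encodes precisely this compatibility between the $\epsilon$- and $\epsilon'$-isotypic pieces.

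First I would carry out the archimedean analysis: using the weight conditions from Section \ref{section-mainthm}, the relative Lie algebra cohomology of $\pi_\infty$ (resp.\ $\tau_{1,\infty}$, $\tau_{2,\infty}$) in the appropriate coefficient system is one-dimensional in the minimal degree on each $\epsilon$- (resp.\ $\epsilon'$-)isotypic piece. Fixing cohomological test vectors and computing the archimedean local zeta integral at the critical point---which, since $n-m-k-1=0$, corresponds exactly to the $L$-values $(1-k)/2$ and $(1-m)/2$---produces the constant that plays the role of $p_\infty(\pi,\tau_1,\tau_2)$ from \eqref{archimedeanperiod}. The denominator $L^S(1+(1-n)/2,\tau_1\times\widetilde{\tau_2})$ arises as the normalizing factor of the Eisenstein series at this point; nonvanishing of the archimedean pairing is what makes $p_\infty$ well-defined.

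Next I would globalize via rational structures. The cuspidal periods $p^\epsilon(\pi)$, $p^{\epsilon'}(\tau_1)$, $p^{\epsilon'}(\tau_2)$ from Section \ref{cuspidalcohomology} compare the $\Q(\pi)$- (resp.\ $\Q(\tau_i)$-)rational structure on finite-part Whittaker models with the Betti rational structure on the $\epsilon$- (resp.\ $\epsilon'$-)isotypic cuspidal cohomology; by construction, dividing a Whittaker-normalized cusp form by its period yields a rational cohomology class. In these terms the integral \eqref{eq-product-RS} becomes a cup product of rational classes evaluated against the $\Q$-fundamental class of the $\GL_{n-1}$-locally symmetric space, so its value lies in $\Q(\pi,\tau_1,\tau_2)$, giving the second assertion of the theorem. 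The $\mathrm{Aut}(\C)$-equivariance \eqref{maingaloisbehavior} follows formally because every ingredient---rational structures on Whittaker models, cohomological realization of the Eisenstein class (with no residues at the critical point thanks to regularity), and cup products of rational classes---commutes with $\sigma\in\mathrm{Aut}(\C)$, while $p_\infty(\pi,\tau_1,\tau_2)$ is by convention kept fixed under $\sigma$.

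The main obstacle I expect is twofold. First, proving that the Fourier-coefficient-plus-Eisenstein integral \eqref{eq-product-RS} admits a clean cohomological reinterpretation: one must show that taking the Fourier coefficient along $Y_j$ (with $j=0$ here) followed by restriction along $\iota_0$ to $\GL_{n-1}$ corresponds, at the level of $(\f{g},K)$-cohomology, to an explicit pullback morphism, and that the resulting pairing with the Eisenstein class is genuine Poincar\'e duality on the $\GL_{n-1}$-locally symmetric space. Second, on the archimedean side one must verify nonvanishing of the archimedean zeta integral on cohomological test vectors---a delicate computation generalizing those of Raghuram ($k=0$) and Mahnkopf ($k\le 1$)---which ensures that $p_\infty(\pi,\tau_1,\tau_2)$ is genuinely nonzero, so that the quotient on the left-hand side of \eqref{maingaloisbehavior} is well-posed.
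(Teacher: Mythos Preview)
Your proposal is essentially correct and follows the same route as the paper: restrict the cuspidal cohomology class of $\pi$ along $\iota$, pair it with the Eisenstein cohomology class built from $\tau_1\otimes\tau_2$ via the Harder--Mahnkopf cup product on $F_{n-1}$, identify the result with the global integral $I(\phi,f_{s_1,s_2})$ through the integral representation, and conclude $\mathrm{Aut}(\C)$-equivariance from the rationality of each ingredient (Lemmas \ref{pibehavior}, \ref{taubehavior}, and the $\mathrm{Aut}(\C)$-equivariance of $\iota^\ast$, $p^\ast$, and $\langle\cdot,\cdot\rangle_{\mathcal{C}}$).

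Two small corrections. First, your ``main obstacle'' about the Fourier coefficient along $Y_j$ is a non-issue here: in the special case $k=n-m-1$ one has $n-m-k-1=0$, so $j=0$ and $Y_0$ is trivial; the integral \eqref{eq-product-RS} degenerates to the plain period integral \eqref{eq-globalintegral} with no auxiliary unipotent integration, and the restriction along $\iota$ is the ordinary proper pullback $\iota^\ast$ on sheaf cohomology. Second, the denominator $L^S(1+\tfrac{1-n}{2},\tau_1\times\widetilde{\tau_2})$ does not arise as a normalizing factor of the Eisenstein series; it comes from the unramified local zeta computation \cite[Proposition 3.8]{YanZhang2023}, which produces the ratio of partial $L$-functions directly. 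Finally, the archimedean nonvanishing you flag is not proved in the paper but taken as a hypothesis, with the proof outsourced to \cite{LLS22}.
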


The assumption on the central characters of $\tau_1,\tau_2$ is only for simplicity in order to ease the notation and presentation of the paper. One can drop this assumption and the central characters will appear as Gauss sums in the denominator of \eqref{maingaloisbehavior} (as in for example \cite[Theorem 1.1]{Raghuram2010}). The assumptions on the weights of $\pi,\tau_1,\tau_2$ are used to guarantee the special values considered in \eqref{maingaloisbehavior} are critical and the cohomological pairing in Section \ref{pairing} makes sense. We also need the nonvanishing hypothesis (Section \ref{archimedeanpair}) as in for example \cite[Hypothesis 3.10]{Raghuram2010} and \cite{Mahnkopf1998, Mahnkopf2005}. This hypothesis is proved in \cite{Sun} in the situation of \emph{loc.cit} and in \cite{LLS22} for the cases treated in this paper. Indeed, more general algebraicity result is obtained in \cite[Theorem 1.2]{LLS22} for $\pi\times\tau$ with $\pi$ a cuspidal representation of $\mathrm{GL}_n(\mathbb{A})$ and $\tau$ a tamely isobaric representation of $\mathrm{GL}_{n-1}(\mathbb{A})$. Our approach is different to \cite{LLS22} by applying the explicit integral representations of \cite{YanZhang2023}.

In the above main theorem, we have only considered a specific critical value of the partial $L$-functions outside a finite set $S$. We can also obtain the result for the whole finite $L$-function (see Corollary \ref{finiteL}) by applying \cite[Proposition 3.17]{Raghuram2010} and the technique there. Furthermore, using the period relations in \cite{RaghuramShahidi2008}, we may also obtain results for twisted $L$-functions and other critical values. As an example, in Corollary \ref{othercriticalvalues}, we shift our critical values to $\frac{1}{2},0,1$. Together with the induction process as in \cite{Mahnkopf2005}, one can deduce an algebraicity result for some $\mathrm{GL}_n\times\mathrm{GL}_m$. However, the period \eqref{inductionperiod} obtained in the induction process is not so clear as it is related to other representations appeared through the induction (see the discussion at the end of Section \ref{section-mainthm}).

Once the algebraicity result for $L$-values is obtained, one can also ask for the $p$-adic interpolation of these $L$-values. For example, the $p$-adic $L$-function for $\mathrm{GL}_n\times\mathrm{GL}_{n-1}$ is constructed in \cite{Jan1, Jan2, Jan3, Jan4} and the $p$-adic $L$-function for $\mathrm{GL}_3$ is constructed in \cite{loeffler2021padic}. The $p$-adic interpolation for $L$-values appeared in \cite{Mahnkopf2005} and Theorem \ref{mainthm} will be studied in the authors' future research.

The rest of the paper, towards the proof of Theorem \ref{mainthm}, is organized as follows. We rephrase Yan and Zhang's integral in Section \ref{section-integral} with explicit choice of certain Whittaker functions and sections of the Eisenstein series. In Section \ref{cohomologyclass}, we review the cohomology theory required for the cohomological interpretation and attach cohomology classes to our chosen Whittaker function and Eisenstein series. The cohomological interpretation is then given in Section \ref{cohomologicalinterpretation} with the main theorem proved in Section \ref{section-mainthm}.

\subsection*{Acknowledgements}
This project was initiated due to a question raised to the second-named author by A. Raghuram during the Texas-Oklahoma Representations and Automorphic Forms Conference at University of Oklahoma in October 2023, and we would like to thank him for asking the question, and for helpful communications. We thank Binyong Sun for pointing us to the paper \cite{LLS22} and helpful discussions.

PY was partially supported by an AMS-Simons Travel Grant.

\section{Integral Representations of $L$-functions}
\label{section-integral}

In this section, we recall (a special case of) integral representations for product $L$-functions constructed in \cite{YanZhang2023}. We begin by fixing some general notations throughout the paper. The base number field considered in this paper is restricted to $\Q$ for simplicity. Denote $\mathbb{A}$ for its adele ring and $\mathbb{A}_{\mathrm{f}}$ the ring of finite adeles. Let $B_n=T_nU_n$ be the standard Borel subgroup of $\mathrm{GL}_n$ of all upper triangular matrices, $U_n$ the unipotent radical of $B_n$ and $T_n$ the diagonal torus. The center of $\mathrm{GL}_n$ will be denoted by $Z_n$. For an automorphic representation $\pi=\otimes_v\pi_v$ of $\mathrm{GL}_n(\mathbb{A})$, we write $\pi=\pi_{\infty}\otimes\pi_{\mathrm{f}}$ with $\pi_{\infty}$ a representation of $\mathrm{GL}_n(\R)$ and $\pi_{\mathrm{f}}=\otimes_{v\neq\infty}\pi_v$ a representation of $\mathrm{GL}_n(\mathbb{A}_{\mathrm{f}})$. For a $L$-function, we use the superscript $L^S$ to indicate the partial $L$-function outside a finite set $S$ of places and use the subscript $L_{\mathrm{f}}$ to indicate the finite part. 

\subsection{The global integral}

Let $P_{m,k}$ with $k=n-m-1$ be the standard parabolic subgroup of $\mathrm{GL}_{n-1}$ with Levi decomposition $P_{m,k}=M_{m,k}N_{m,k}$ and the Levi subgroup $M_{m,k}$ is isomorphic to $\mathrm{GL}_m\times\mathrm{GL}_k$. Let $(\tau_1,V_1)$ (resp. $(\tau_2,V_2)$) be an irreducible cuspidal automorphic representation of $\mathrm{GL}_{m}(\mathbb{A})$ (resp. $\mathrm{GL}_k(\mathbb{A})$). For a pair of complex numbers $(s_1,s_2)$, we consider the induced representation
\begin{equation}
\label{inducedrepresentation}
\mathrm{Ind}^{\mathrm{GL}_{n-1}(\mathbb{A})}_{P_{m,k}(\mathbb{A})}\left(\tau_1|\cdot|^{s_1-\frac{1}{2}}\otimes\tau_2|\cdot|^{-s_2+\frac{1}{2}}\right).
\end{equation} 
For a section $f_{s_1,s_2}$ of above induced representation space (viewed as a $\C$-valued function), we define an Eisenstein series on $\mathrm{GL}_{n-1}(\mathbb{A})$ by
\begin{equation*}
E(h;f_{s_1,s_2})=\sum_{\gamma\in P_{m,k}(\Q)\backslash\mathrm{GL}_{n-1}(\Q)}f_{s_1,s_2}(\gamma h),\qquad h\in\mathrm{GL}_{n-1}(\mathbb{A}).
\end{equation*}

Let $(\pi,V_{\pi})$ be an irreducible cuspidal automorphic representation of $\mathrm{GL}_n(\mathbb{A})$. Let $\iota:\mathrm{GL}_{n-1}\to\mathrm{GL}_n$ be an embedding given by
\begin{equation*}
\iota:\left[\begin{array}{cc}
a & b\\
c & d
\end{array}\right]\mapsto\left[\begin{array}{ccc}
a & & b\\
 & 1 & \\
c & & d
\end{array}\right],
\end{equation*}
where $a,b,c,d$ are of sizes $m\times m, m\times k, k\times m, k\times k$. Given a cusp form $\phi\in\pi$ and a section $f_{s_1, s_2}$, the global integral constructed in \cite{YanZhang2023} is defined as
\begin{equation}
\label{eq-globalintegral}
I(\phi,f_{s_1,s_2})=\int_{\mathrm{GL}_{n-1}(\Q)\backslash\mathrm{GL}_{n-1}(\mathbb{A})}\phi(\iota(h))E(h,f_{s_1,s_2})dh.
\end{equation}

This integral converges absolutely and uniformly in vertical strips in $\C$ for each variable $s_1$ and $s_2$, away from the poles of the Eisenstein series. 

We fix a non-trivial additive character $\psi:\Q\backslash\mathbb{A}\to\C^{\times}$ throughout the paper. Let $\mathcal{W}(\pi):=\mathcal{W}(\pi,\psi)$ be the $\psi$-Whittaker model of $\pi$, and write $\mathcal{W}(\pi)=\otimes_v\mathcal{W}(\pi_v)$, where $\mathcal{W}(\pi_v)$ is the $\psi_v$-Whittaker model of $\pi_v$. We also let $\mathcal{W}(\tau_1):=\mathcal{W}(\tau_1,\psi^{-1})$ (resp. $\mathcal{W}(\tau_2):=\mathcal{W}(\tau_2,\psi^{-1})$) be the $\psi^{-1}$-Whittaker model of $\tau_1$ (resp. $\tau_2$) and write $\mathcal{W}(\tau_1)=\otimes_v\mathcal{W}(\tau_{1,v})$ (resp. $\mathcal{W}(\tau_2)=\otimes_v\mathcal{W}(\tau_{2,v})$) with $\mathcal{W}(\tau_{1,v})$ (resp. $\mathcal{W}(\tau_{2,v})$) the local $\psi_v^{-1}$-Whittaker model of $\tau_{1,v}$ (resp. $\tau_{2,v}$). 

Let $W_\phi$ be the $\psi$-Whittaker function of $\phi$. In the region of absolute convergence, the integral $I(\phi,f_{s_1,s_2})$ unfolds to
\begin{equation}
\label{eq-unfolding}
I(\phi,f_{s_1,s_2})=\Psi(W_{\phi},f^{\mathcal{W}}_{s_1,s_2}):=\int_{U_{n-1}(\mathbb{A})\backslash\mathrm{GL}_{n-1}(\mathbb{A})}W_{\phi}(\iota(h))f^{\mathcal{W}}_{s_1,s_2}(h)dh,
\end{equation}
where
\begin{equation*}
f^{\mathcal{W}}_{s_1,s_2}(h)=\int_{U_m(\Q)\backslash U_m(\mathbb{A})\times U_k(\Q)\backslash U_k(\mathbb{A})}f_{s_1,s_2}\left(\left[\begin{array}{cc}
u_1 & 0\\
0 & u_2
\end{array}\right]h\right)\psi(u_1)\psi(u_2)du_1du_2.
\end{equation*}

We take $\phi=\otimes_v\phi_v$ to be a pure tensor and write $W_{\phi}=\otimes_v W_{\phi,v}$ where $W_{\phi,v}$ is a local Whittaker function of $\pi_v$. We also take $f_{s_1,s_2}=\otimes_vf_{s_1,s_2,v}$ (and thus $f^{\mathcal{W}}_{s_1,s_2}=\otimes_vf^{\mathcal{W}}_{s_1,s_2,v}$) to be decomposable with $f_{s_1,s_2,v}\in \Ind_{P_{m,k}(\Q_v)}^{\GL_{n-1}(\Q_v)} (\tau_{1,v}|\cdot|_v^{s_1-\frac{1}{2}}\otimes\tau_{2,v}|\cdot|_v^{-s_2+\frac{1}{2}})$ and $f_{s_1,s_2,v}^\mathcal{W}\in \Ind_{P_{m,k}(\Q_v)}^{\GL_{n-1}(\Q_v)} (\mathcal{W}(\tau_{1,v})|\cdot|_v^{s_1-\frac{1}{2}}\otimes \mathcal{W}(\tau_{2,v})|\cdot|_v^{-s_2+\frac{1}{2}})$. Then for $\Re(s_i)\gg 0$ for $i=1, 2$, the integral $\Psi(W_{\phi},f^{\mathcal{W}}_{s_1,s_2})$ has an Euler product decomposition (see \cite[Theorem 2.4]{YanZhang2023})
\begin{equation*}
	\Psi(W_{\phi},f^{\mathcal{W}}_{s_1,s_2})=\prod_v\Psi_v(W_{\phi,v},f_{s_1,s_2,v}^{\mathcal{W}})
\end{equation*}
where
\begin{equation*}
\Psi_v(W_{\phi,v},f_{s_1,s_2,v}^{\mathcal{W}}):=\int_{U_{n-1}(\Q_v)\backslash\mathrm{GL}_{n-1}(\Q_v)}W_{\phi,v}(\iota(h))f^{\mathcal{W}}_{s_1,s_2,v}(h)dh.	
\end{equation*}

At a local place $v$, the local zeta integral $\Psi_v(W_{\phi,v},f_{s_1,s_2,v}^{\mathcal{W}})$ is absolutely convergent for $\Re(s_i)\gg 0$ for $i=1, 2$. Over nonarchimedean local fields $\Q_v$, there exist $W_{\phi,v}$ and $f_{s_1,s_2,v}^{\mathcal{W}}$ such that $\Psi_v(W_{\phi,v},f_{s_1,s_2,v}^{\mathcal{W}})$ is absolutely convergent and equals 1, for all $(s_1, s_2)\in \C^2$. Over archimedean local field $\mathbb{R}$, for any $(s_1, s_2)\in \C^2$, there are choices of data $(W_{\phi,\infty}^j, f_{s_1,s_2,\infty}^{\mathcal{W},j})$ such that $\sum_j \Psi_\infty(W_{\phi,\infty}^j,f_{s_1,s_2,\infty}^{\mathcal{W},j})$ is holomorphic and nonzero in a neighborhood of $(s_1, s_2)$. See \cite[Proposition 3.4]{YanZhang2023}.

\subsection{Explicit choice of vectors}
\label{subsection-choiceofvectors}
In this subsection we make careful choice of local vectors $W_{\phi,v}$ and $f_{s_1,s_2,v}^{\mathcal{W}}$ in the local zeta integrals. 

Let $\mathfrak{f}_{\pi}$ (resp. $\mathfrak{f}_1$, resp. $\mathfrak{f}_2$) be the conductor of $\pi$ (resp. $\tau_1$, resp. $\tau_2$). We assume $\tau_1,\tau_2$ have trivial central characters for simplicity. Set $\mathfrak{f}_{\tau_1, \tau_2}=\mathrm{gcd}(\mathfrak{f}_1,\mathfrak{f}_2)=:\prod_v\mathfrak{f}_{\tau_1,\tau_2,v}$ and $\mathfrak{f}=\mathrm{gcd}(\mathfrak{f}_{\pi},\mathfrak{f}_{\tau_1,\tau_2})=:\prod_v\mathfrak{f}_v$. Let $S=\{v:v|\mathfrak{f}\text{ or }v=\infty\}$ be a finite set of places. Let $K_0(\mathfrak{f})\subset\mathrm{GL}_{n-1}(\mathbb{A}_f)$ be an open compact subgroup defined as
\begin{equation*}
\begin{aligned}
K_0(\mathfrak{f})&=\prod_{v<\infty}K_{0,v}(\mathfrak{f}_v),\\
K_{0,v}(\mathfrak{f}_v)&=\{\gamma\in\mathrm{GL}_{n-1}(\Z_v):\gamma\text{ mod }\mathfrak{f}_v\in B_{n-1}(\Z_v)\}.
\end{aligned}
\end{equation*}
That is, $K_{0,v}(\mathfrak{f}_v)$ contains matrices in $\gamma\in\mathrm{GL}_{n-1}(\Z_v)$ whose entries below the diagonal lie in $\mathfrak{f}_v\Z_v$.

For a finite place $v$, we define $W^0_{\pi,v}$ to be the normalized new vector as in \cite[Section 3.1.3]{Raghuram2010}. That is, we take $W^0_{\pi,v}$ be the new vector normalized such that $W^0_{\pi,v}(t_{\pi,v})=1$ where $t_{\pi,v}$ as in \cite[Lemma 1.3.2]{Mahnkopf2005}. If $\pi_v$ is unramified, then we take $t_{\pi,v}=1$ so that $W_{\pi,v}^0=W_{\pi,v}^{\mathrm{sp}}$ is the spherical vector. For any $\sigma\in\mathrm{Aut}(\C)$, we take $t_{\pi^{\sigma},v}=t_{\pi,v}$ so that $^{\sigma}W_{\pi,v}^0=W^0_{\pi^{\sigma},v}$. We similarly define the normalized new vectors $W^0_{\tau_1,v}, W^0_{\tau_2,v}$ for $\tau_1,\tau_2$ and denote $t_{\tau}=\mathrm{diag}[t_{\tau_1},t_{\tau_2}]$.

$\text{ }$

For each place $v$, we make the choice of $W_{\phi,v}$ as follows.
\begin{itemize}
\item{If $v\nmid\mathfrak{f}\infty$, we take $W_{\phi,v}=W_{\pi,v}^0$ to be the normalized new vector.}
\item{If $v|\mathfrak{f}$, we let $W_{\phi,v}$ be a Whittaker function whose restriction to $\mathrm{GL}_{n-1}(\Q_v)$ is supported on $U_{n-1}(\Q_v)t_{\tau,v}K_{0,v}(\mathfrak{f}_v)$ and 
\begin{equation*}
W_{\phi,v}\left(ut_{\tau,v}\gamma\right)=\psi(u),
\end{equation*}
for all $u\in U_{n-1}(\Q_v)$ and $\gamma\in K_{0,v}(\mathfrak{f}_v)$.
}
\item{If $v=\infty$, we take arbitrary non-zero $W_{\phi,\infty}$.}
\end{itemize}
Here we note that, by \cite[(2.3),(2.4), Proposition 3.2]{JacquetShalika1983} (see also \cite[Theorems E, F]{GelfandKajdan}), the Whittaker function on $\mathrm{GL}_n(\Q_v)$ is uniquely determined by its restriction to $\mathrm{GL}_{n-1}(\Q_v)$ and our choice made in the case $v|\mathfrak{f}$ exists.

$\text{ }$

For each place $v$, we make the choice of $f^{\mathcal{W}}_{s_1,s_2,v}$ as follow.
\begin{itemize}
\item{If $v\nmid\mathfrak{f}_{\tau_1,\tau_2}\infty$, we take $f^{\mathcal{W}}_{s_1,s_2,v}$ to be the spherical function. That is
\begin{equation*}
f^{\mathcal{W}}_{s_1,s_2,v}(g)=|\det(m_1)|^{s_1+\frac{k-1}{2}}|\det(m_2)|^{-s_2+\frac{1-m}{2}}W_{\tau_1,v}^0(m_1)W_{\tau_2,v}^0(m_2),
\end{equation*}
if we write $g=mn\gamma$ with $m=\mathrm{diag}[m_1,m_2]\in M_{m,k}(\Q_v)$, $n\in N_{m,k}(\Q_v),\gamma\in\mathrm{GL}_{n-1}(\Z_v)$.
}
\item{If $v|\mathfrak{f}_{\tau_1, \tau_2}$, we take $f^{\mathcal{W}}_{s_1,s_2,v}$ to be a function supported on $P_{m,k}(\Q_v)K_{0,v}(\mathfrak{f}_{\tau_1,\tau_2,v})$, and 
\begin{equation*}
\begin{aligned}
f^{\mathcal{W}}_{s_1,s_2,v}(g)=|\det(m_1)|^{s_1+\frac{k-1}{2}}|\det(m_2)|^{-s_2+\frac{1-m}{2}} W_{\tau_1,v}^0(m_1)W_{\tau_2,v}^0(m_2),
\end{aligned}
\end{equation*}
if we write $g=mn\gamma$ with $m=\mathrm{diag}[m_1,m_2]\in M_{m,k}(\Q_v)$, $n\in N_{m,k}(\Q_v)$, $\gamma\in K_{0,v}(\mathfrak{f}_{\tau_1, \tau_2, v})$.
}
\item{If $v=\infty$, we take arbitrary non-zero $f^{\mathcal{W}}_{s_1,s_2,\infty}$.}
\end{itemize}

$\text{ }$

Here we do not make the choice of vectors at archimedean place. Later they will be taken as cohomological vectors. We will specialize to the case $s_1=\frac{1-k}{2}$ and $s_2=\frac{1-m}{2}$ and we denote
\begin{equation*}
f_{\tau_1,\tau_2}^{\mathcal{W}}:=f_{\frac{1-k}{2},\frac{1-m}{2}}^{\mathcal{W}},\qquad f_{\tau_1,\tau_2,v}^{\mathcal{W}}:=f_{\frac{1-k}{2},\frac{1-m}{2},v}^{\mathcal{W}}.
\end{equation*}
We also set the finite part of our vectors as
\begin{equation*}
W_{\phi,\mathrm{f}}=\prod_{v\neq \infty}W_{\phi,v},\qquad f^{\mathcal{W}}_{\tau_1,\tau_2,\mathrm{f}}=\prod_{v\neq \infty}f^{\mathcal{W}}_{\tau_1,\tau_2,v}.
\end{equation*}

We need to calculate the local integrals for our chosen vectors. The unramified computations (for $v\nmid\mathfrak{f}$) is done in \cite[Proposition 3.8]{YanZhang2023}. We calculate the ramified local integrals in the following.

\begin{lem}
\label{ramifiedcomputation}
Let $v|\mathfrak{f}$, then with our choice of $W_{\phi,v},f^{\mathcal{W}}_{s_1,s_2,v}$ above, we have
\begin{equation*}
\Psi_v(W_{\phi,v},f^{\mathcal{W}}_{s_1,s_2,v})=\mathrm{vol}(K_{0,v}(\mathfrak{f}_v))|\det(t_{\tau_1,v})|^{s_1+\frac{k-1}{2}}|\det(t_{\tau_2,v})|^{-s_2+\frac{1-m}{2}}.
\end{equation*}
\end{lem}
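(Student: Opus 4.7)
The plan is to collapse the local zeta integral to an integration of an essentially constant function over a single Iwahori-type coset, and then recognize the volume.

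First, I would use the support condition on $W_{\phi,v}$: the integrand in
\[
\Psi_v(W_{\phi,v},f^{\mathcal{W}}_{s_1,s_2,v}) = \int_{U_{n-1}(\Q_v)\backslash\GL_{n-1}(\Q_v)} W_{\phi,v}(\iota(h))\,f^{\mathcal{W}}_{s_1,s_2,v}(h)\,dh
\]
vanishes off $U_{n-1}(\Q_v)\,t_{\tau,v}\,K_{0,v}(\mathfrak{f}_v)$. Since $t_{\tau,v}\in M_{m,k}(\Q_v)$ normalizes $U_{n-1}(\Q_v)$, the parametrization $h=u\,t_{\tau,v}\,k$ gives the identification
\[
U_{n-1}(\Q_v)\backslash U_{n-1}(\Q_v)\,t_{\tau,v}\,K_{0,v}(\mathfrak{f}_v) \;\cong\; (U_{n-1}(\Q_v)\cap K_{0,v}(\mathfrak{f}_v))\backslash K_{0,v}(\mathfrak{f}_v),
\]
and a descent-of-measure argument (the modular factor is trivial because both groups in sight are compact) rewrites the integral as one over $K_{0,v}(\mathfrak{f}_v)$ in such a way that the total mass of the integration domain is $\mathrm{vol}(K_{0,v}(\mathfrak{f}_v))$.

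Second, I would evaluate the two factors of the integrand at $h=t_{\tau,v}\,k$. Taking $u=1$ in the defining relation $W_{\phi,v}(u\,t_{\tau,v}\,\gamma)=\psi(u)$ gives $W_{\phi,v}(\iota(t_{\tau,v}\,k))=1$. For the Eisenstein section, since $t_{\tau,v}=\mathrm{diag}(t_{\tau_1,v},t_{\tau_2,v})\in M_{m,k}(\Q_v)\subset P_{m,k}(\Q_v)$, the Iwasawa-type formula for $f^{\mathcal{W}}_{s_1,s_2,v}$ produces
\[
f^{\mathcal{W}}_{s_1,s_2,v}(t_{\tau,v}\,k)=|\det t_{\tau_1,v}|^{s_1+\frac{k-1}{2}}\,|\det t_{\tau_2,v}|^{-s_2+\frac{1-m}{2}}\,W^0_{\tau_1,v}(t_{\tau_1,v})\,W^0_{\tau_2,v}(t_{\tau_2,v}),
\]
and the normalization $W^0_{\tau_i,v}(t_{\tau_i,v})=1$ of the new vectors kills the two Whittaker factors. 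The integrand is therefore the constant $|\det t_{\tau_1,v}|^{s_1+\frac{k-1}{2}}\,|\det t_{\tau_2,v}|^{-s_2+\frac{1-m}{2}}$ on the support. Pulling it out of the integral leaves exactly $\mathrm{vol}(K_{0,v}(\mathfrak{f}_v))$, which is the claimed formula.

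The main obstacle is step one: the measure-theoretic descent must be set up precisely so that the volume factor appearing at the end is $\mathrm{vol}(K_{0,v}(\mathfrak{f}_v))$ and not $\mathrm{vol}(K_{0,v}(\mathfrak{f}_v))/\mathrm{vol}(U_{n-1}(\Q_v)\cap K_{0,v}(\mathfrak{f}_v))$ or any other normalization of the quotient Haar measure on $U_{n-1}(\Q_v)\backslash\GL_{n-1}(\Q_v)$. A secondary technicality, when $\mathfrak{f}_v\ne\mathfrak{f}_{\tau_1,\tau_2,v}$, is verifying that $t_{\tau,v}\,k$ lies in the stated support $P_{m,k}(\Q_v)\,K_{0,v}(\mathfrak{f}_{\tau_1,\tau_2,v})$ of $f^{\mathcal{W}}_{s_1,s_2,v}$ for every $k\in K_{0,v}(\mathfrak{f}_v)$; this uses the explicit block-diagonal form of $t_{\tau,v}$ built from the Jacquet--Shalika elements together with the divisibility $\mathfrak{f}_v\mid\mathfrak{f}_{\tau_1,\tau_2,v}$ to rewrite $t_{\tau,v}\,k$ explicitly as $m\,n\,\gamma$ with $\gamma\in K_{0,v}(\mathfrak{f}_{\tau_1,\tau_2,v})$, at which point the formula for $f^{\mathcal{W}}_{s_1,s_2,v}$ is directly applicable.
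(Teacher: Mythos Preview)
Your proposal is correct and follows essentially the same route as the paper's proof: reduce to the coset $U_{n-1}(\Q_v)\,t_{\tau,v}\,K_{0,v}(\mathfrak{f}_v)$ via the support condition on $W_{\phi,v}$, evaluate both factors there using the normalization $W^0_{\tau_i,v}(t_{\tau_i,v})=1$, and read off the volume. The paper's argument is in fact much terser---it simply says ``clearly the integral equals'' the claimed quantity---whereas you spell out the measure descent and flag the compatibility of $K_{0,v}(\mathfrak{f}_v)$ with the support $P_{m,k}(\Q_v)K_{0,v}(\mathfrak{f}_{\tau_1,\tau_2,v})$ of the section as points requiring care; these are reasonable cautions, not departures from the paper's method.
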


\begin{proof}
By our definition of $W_{\phi,v}$, the integrand vanishes unless $h\in U_{n-1}(\Q_v)t_{\tau,v}K_{0,v}(\mathfrak{f}_v)$. Then clearly the integral equals
\[
\begin{aligned}
&\mathrm{vol}(K_{0,v}(\mathfrak{f}_v))|\det(t_{\tau_1,v})|^{s_1+\frac{k-1}{2}}|\det(t_{\tau_2,v})|^{-s_2+\frac{1-m}{2}}W_{\tau_1,v}^0(t_{\tau_1,v})W_{\tau_2,v}^0(t_{\tau_2,v})\\
=&\mathrm{vol}(K_{0,v}(\mathfrak{f}_v))|\det(t_{\tau_1,v})|^{s_1+\frac{k-1}{2}}|\det(t_{\tau_2,v})|^{-s_2+\frac{1-m}{2}}
\end{aligned}
\]
by the definition of the normalized new vectors.
\end{proof}

With the explicit choice of vectors $W_{\phi,v},f_{s_1,s_2,v}^{\mathcal{W}}$, we now rephrase \cite[Theorem 1.1]{YanZhang2023} as follows.

\begin{thm}\cite[Theorem 1.1]{YanZhang2023}
Let $S$ be a finite set of places containing $\infty$ outside of which all date are unramified.
Let $\phi$ and $f_{s_1,s_2}$ be decomposable, with the local vectors chosen above. Then we have
\begin{equation}
\label{integralrepresentation}
	I(\phi,f_{\frac{1-k}{2},\frac{1-m}{2}})=  \frac{L^S(\frac{1-k}{2},\pi\times \tau_1)L^S(\frac{1-m}{2},\widetilde\pi\times \widetilde{\tau}_2) }{L^S(1+\frac{1-n}{2}, \tau_1 \times \widetilde{\tau}_2)} \cdot \mathrm{vol}(K_0(\mathfrak{f}))
	 \cdot \Psi_\infty(W_{\phi,v},f_{\tau_1,\tau_2,v}^{\mathcal{W}}).
\end{equation}
\end{thm}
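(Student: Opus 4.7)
The plan is to factor the global integral into local integrals and then evaluate each local factor using the results already recorded in the excerpt, with care paid to the specialization $s_1=\tfrac{1-k}{2}$, $s_2=\tfrac{1-m}{2}$.

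First I would start from the unfolding identity \eqref{eq-unfolding}, which expresses $I(\phi,f_{s_1,s_2})$ as the global zeta integral $\Psi(W_\phi,f^{\mathcal{W}}_{s_1,s_2})$, valid in the region $\Re(s_i)\gg 0$. Since the chosen data $\phi=\otimes_v\phi_v$ and $f_{s_1,s_2}=\otimes_v f_{s_1,s_2,v}$ are decomposable, the Euler product decomposition recalled after \eqref{eq-unfolding} gives
\begin{equation*}
\Psi(W_\phi,f^{\mathcal{W}}_{s_1,s_2})=\prod_v \Psi_v(W_{\phi,v},f^{\mathcal{W}}_{s_1,s_2,v}).
\end{equation*}
I would then split the product according to whether $v\notin S$, $v\mid \mathfrak{f}$, or $v=\infty$.

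Next, for each $v\notin S$ (so $v$ is finite and all data are unramified at $v$), the normalized-new-vector choice coincides with the spherical vector, and \cite[Proposition 3.8]{YanZhang2023} computes the local zeta integral as the ratio of local $L$-factors
\begin{equation*}
\Psi_v(W_{\phi,v},f^{\mathcal{W}}_{s_1,s_2,v})=\frac{L(s_1,\pi_v\times\tau_{1,v})\,L(s_2,\widetilde\pi_v\times\widetilde\tau_{2,v})}{L(s_1+s_2,\tau_{1,v}\times\widetilde\tau_{2,v})}.
\end{equation*}
Taking the product over $v\notin S$ produces exactly the partial-$L$-function ratio displayed in \eqref{integralrepresentation}. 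For each finite $v\mid \mathfrak{f}$, I would invoke Lemma \ref{ramifiedcomputation}, which gives
\begin{equation*}
\Psi_v(W_{\phi,v},f^{\mathcal{W}}_{s_1,s_2,v})=\Vol(K_{0,v}(\mathfrak{f}_v))\,|\det(t_{\tau_1,v})|^{s_1+\frac{k-1}{2}}|\det(t_{\tau_2,v})|^{-s_2+\frac{1-m}{2}}.
\end{equation*}
The key observation is that the exponents $s_1+\tfrac{k-1}{2}$ and $-s_2+\tfrac{1-m}{2}$ vanish precisely at $s_1=\tfrac{1-k}{2}$ and $s_2=\tfrac{1-m}{2}$, so after specializing to the critical point each determinant factor becomes $1$ and only $\Vol(K_{0,v}(\mathfrak{f}_v))$ survives. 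Multiplying over $v\mid\mathfrak{f}$ assembles the single global factor $\Vol(K_0(\mathfrak{f}))$.

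Finally, combining the three regimes yields
\begin{equation*}
I(\phi,f_{\frac{1-k}{2},\frac{1-m}{2}})=\frac{L^S(\tfrac{1-k}{2},\pi\times\tau_1)\,L^S(\tfrac{1-m}{2},\widetilde\pi\times\widetilde\tau_2)}{L^S(1+\tfrac{1-n}{2},\tau_1\times\widetilde\tau_2)}\cdot \Vol(K_0(\mathfrak{f}))\cdot \Psi_\infty(W_{\phi,\infty},f^{\mathcal{W}}_{\tau_1,\tau_2,\infty}),
\end{equation*}
which is the stated identity; the equality, initially established for $\Re(s_i)\gg 0$, then extends to the specialization point by meromorphic continuation (with both sides regular at this point for generic archimedean data, per \cite[Proposition 3.4]{YanZhang2023}). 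There is no serious obstacle here: the only moving part worth double-checking is the bookkeeping of the exponents in Lemma \ref{ramifiedcomputation} that causes the determinant factors to drop out at precisely the chosen critical point, which is the reason for specializing to $(s_1,s_2)=(\tfrac{1-k}{2},\tfrac{1-m}{2})$ in the first place.
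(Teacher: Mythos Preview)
Your proposal is correct and follows exactly the route the paper intends: the paper does not actually write out a proof of this theorem but simply presents it as a rephrasing of \cite[Theorem 1.1]{YanZhang2023} after having recorded the unramified computation \cite[Proposition 3.8]{YanZhang2023} and the ramified computation Lemma \ref{ramifiedcomputation}. Your decomposition into $v\notin S$, $v\mid\mathfrak{f}$, and $v=\infty$, together with the observation that the determinant exponents in Lemma \ref{ramifiedcomputation} vanish at $(s_1,s_2)=(\tfrac{1-k}{2},\tfrac{1-m}{2})$, is precisely the bookkeeping the paper leaves implicit.
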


\section{The Cohomology Classes and Periods}
\label{cohomologyclass}

The aim of this section is to attach cohomology classes and periods to cuspidal representations and Eisenstein series. We closely follow \cite[Section 3,4]{Mahnkopf2005}. Though only the Eisenstein series induced from the parabolic subgroup of type $(n-2,1)$ are considered there, most arguments can be directly generalized with the supplement of \cite{HarderRaghuram2020book}.

\subsection{Notations and preliminaries}

We fix more notations and briefly review some  cohomology groups used in this section for the convenience of the reader. The reader may refer to \cite{HarderRaghuram2020book, Mahnkopf2005, RaghuramShahidi2008} for more details. 

Write $\mathrm{O}(n),\mathrm{SO}(n)$ for the orthogonal group and the special orthogonal group in $\mathrm{GL}_n(\R)$. Let $K_{n,\infty}=\mathrm{O}(n)Z_n(\R)$ be the maximal compact subgroup of $\mathrm{GL}_n(\R)$ thickened by the center $Z_n(\R)$. Denote $K_{n,\infty}^0$ for the connected component of $K_{n,\infty}$ and $K_{n,\infty}^1=\mathrm{SO}(n)$. For an open compact subgroup $K_{\mathrm{f}}$ of $\mathrm{GL}_n(\mathbb{A}_{\mathrm{f}})$, we consider the locally symmetric spaces
\begin{equation*}
S_n(K_{\mathrm{f}}):=\mathrm{GL}_n(\Q)\backslash\mathrm{GL}_n(\mathbb{A})/K_{n,\infty}^0K_{\mathrm{f}},\qquad F_n(K_{\mathrm{f}}):=\mathrm{GL}_n(\Q)\backslash\mathrm{GL}_n(\mathbb{A})/K_{n,\infty}^1K_{\mathrm{f}}.
\end{equation*}
Taking the limit over all open compact subgroup $K_{\mathrm{f}}$, we may also consider
\begin{equation*}
S_n:=\lim_{\substack{\longleftarrow\\K_{\mathrm{f}}}}S_n(K_{\mathrm{f}}),\qquad F_n:=\lim_{\substack{\longleftarrow\\K_{\mathrm{f}}}}F_n(K_{\mathrm{f}}).
\end{equation*}

Let $X^+(T_n)$ be the set of dominant integral weights of $T_n$ and $X_0^+(T_n)$ the subset of pure weights. For $\mu\in X^+(T_n)$, we denote by $M_{\mu}$ the irreducible representation of $\mathrm{GL}_n(\C)$ of highest weight $\mu$ and let $\mathcal{M}_{\mu}$ for the sheaf of $\Q$-vector spaces on $S_n(K_{\mathrm{f}})$ associated to $M_{\mu}$. We may also use the same notation when viewing $\mathcal{M}_{\mu}$ as a sheaf on $F_n(K_{\mathrm{f}}), S_n, F_n$. We will write $\mu^{\vee}$ for the dual weight of $\mu$ and $M_{\mu}^{\vee}=M_{\mu^{\vee}}$ the contragredient of $M_{\mu}$. 

We consider the sheaf cohomology group $H^{\bullet}(S_n(K_{\mathrm{f}}),\mathcal{M}_{\mu})$.
Let $\overline{S}_n(K_{\mathrm{f}})=S_n(K_{\mathrm{f}})\cup\partial S_n(K_{\mathrm{f}})$ be the Borel-Serre compactification of $S_n(K_{\mathrm{f}})$ where the boundary is stratified as $\partial S_n(K_{\mathrm{f}})=\cup_P\partial_P S_n(K_{\mathrm{f}})$ with $P$ running through the $\mathrm{GL}_n(\Q)$-conjugacy classes of proper parabolic subgroups defined over $\Q$. We will also consider the boundary cohomology group $H^{\bullet}(\partial S_n(K_{\mathrm{f}}),\mathcal{M}_{\mu})$ and we use the subscript ``$c$", i.e. $H_c^{\bullet}(S_n(K_{\mathrm{f}}),\mathcal{M}_{\mu})$, for the cohomology group with compact support. We can also similarly consider the sheaf cohomology groups on $F_n(K_{\mathrm{f}}), S_n, F_n$.

The sheaf cohomology group $H^{\bullet}(S_n(K_{\mathrm{f}}),\mathcal{M}_{\mu})$ can be identified with the Lie algebra cohomology group
\begin{equation*}
H^{\bullet}(S_n(K_{\mathrm{f}}),\mathcal{M}_{\mu})\cong H^{\bullet}(\mathfrak{g}_{n},K_{n,\infty}^0;\mathcal{A}(\mathrm{GL}_n(\Q)\backslash\mathrm{GL}_n(\mathbb{A}))^{K_{\mathrm{f}}}\otimes M_{\mu}).
\end{equation*}
Here $\mathfrak{g}_{n}$ is the Lie algebra of $\mathrm{GL}_n(\R)$ and $\mathcal{A}(\mathrm{GL}_n(\Q)\backslash\mathrm{GL}_n(\mathbb{A}))$ is the space of automorphic forms. The cuspidal cohomology group is defined as a subgroup of $H^{\bullet}(S_n(K_{\mathrm{f}}),\mathcal{M}_{\mu})$ such that
\begin{equation*}
H_{\mathrm{cusp}}^{\bullet}(S_n(K_{\mathrm{f}}),\mathcal{M}_{\mu})\cong H^{\bullet}(\mathfrak{g}_{n},K_{n,\infty}^0;\mathcal{A}_{\mathrm{cusp}}(\mathrm{GL}_n(\Q)\backslash\mathrm{GL}_n(\mathbb{A}))^{K_{\mathrm{f}}}\otimes M_{\mu}),
\end{equation*}
where $\mathcal{A}_{\mathrm{cusp}}(\mathrm{GL}_n(\Q)\backslash\mathrm{GL}_n(\mathbb{A}))$ is the space of cusp forms. The decomposition of $\mathcal{A}_{\mathrm{cusp}}(\mathrm{GL}_n(\Q)\backslash\mathrm{GL}_n(\mathbb{A}))$ into cuspidal automorphic representations provide a direct sum decomposition
\begin{equation*}
\begin{aligned}
&H^{\bullet}(\mathfrak{g}_{n},K_{n,\infty}^0;\mathcal{A}_{\mathrm{cusp}}(\mathrm{GL}_n(\Q)\backslash\mathrm{GL}_n(\mathbb{A}))^{K_{\mathrm{f}}}\otimes M^{\vee}_{\mu})\\
\cong&\bigoplus_{\pi\in\mathrm{Coh}(\mathrm{GL}_n,K_{\mathrm{f}},\mu)}H^{\bullet}(\mathfrak{g}_{n},K_{n,\infty}^0;\pi\otimes M_{\mu}^{\vee}).
\end{aligned}
\end{equation*}
We are writing $\mathrm{Coh}(\mathrm{GL}_n,K_{\mathrm{f}},\mu)$ for the set of those $\pi$ with nonzero contribution to above cohomology groups and we set $\mathrm{Coh}(\mathrm{GL}_n,\mu):=\cup_{K_{\mathrm{f}}}\mathrm{Coh}(\mathrm{GL}_n,K_{\mathrm{f}},\mu)$. The direct summand in above decomposition can be further decomposed into isotypic components along permissible signs $\epsilon\in\{\pm\}\cong(K_{n,\infty}/K_{n,\infty}^0)^{\wedge}$ (i.e. $\epsilon$ can be arbitrary when $n$ is even and $\epsilon$ is the central character of $\pi_{\infty}\otimes M_{\mu}$ at $-1$ if $n$ is odd):
\begin{equation*}
H^{\bullet}(\mathfrak{g}_{n},K_{n,\infty}^0;\pi\otimes M_{\mu}^{\vee})=\bigoplus_{\epsilon}H^{\bullet}(\mathfrak{g}_{n},K_{n,\infty}^0;\pi\otimes M_{\mu}^{\vee})(\epsilon).
\end{equation*}

\subsection{Cuspidal cohomology}
\label{cuspidalcohomology}

We now recall the definition of periods and cohomology classes associated to a regular algebraic cuspidal automorphic representation following \cite{RaghuramShahidi2008} and \cite[Section 3.2.1, 3.2.2]{Raghuram2010}.

Let $(\pi,V_{\pi})$ be a regular algebraic cuspidal automorphic representation of $\mathrm{GL}_n(\mathbb{A})$. Then there is a dominant integral pure weight $\mu\in X_0^+(T_n)$ such that $\pi\in\mathrm{Coh}(\mathrm{GL}_n,\mu^{\vee})$. Let $\epsilon\in\{\pm\}\cong(K_{n,\infty}/K_{n,\infty}^0)^{\wedge}$ be a permissible sign for $\pi$. Denote $b_n=n^2/4$ if $n$ is even and $b_n=(n^2-1)/4$ if $n$ is odd. We fix a generator $[\pi_{\infty}]$ for the one-dimensional $\C$-vector space $H^{b_n}(\mathfrak{g}_{n},K_{\infty}^0;V_{\pi,\infty}\otimes M_{\mu}^{\vee})(\epsilon)$. There is a $\mathrm{GL}_n(\mathbb{A}_{\mathrm{f}})$ equivariant map
\begin{equation}
\label{cohomologymap}
\mathcal{F}_{\pi_{\mathrm{f}},\epsilon,[\pi_{\infty}]}:\mathcal{W}(\pi_{\mathrm{f}})\to H^{b_n}(\mathfrak{g}_{n},K_{n,\infty}^0;V_{\pi}\otimes M_{\mu}^{\vee})(\epsilon).
\end{equation}
Recall that $\mathcal{W}(\pi)=\mathcal{W}(\pi,\psi)$ is the $\psi$-Whittaker model of $\pi$, and $\mathcal{W}(\pi_v),\mathcal{W}(\pi_{\mathrm{f}})$ are the local counterparts.

It is known in \cite[Theorem 3.1]{RaghuramShahidi2008} that $\pi_{\mathrm{f}}$ admits a $\Q(\pi)$-structure over a number field $\Q(\pi)$ (called the rationality field of $\pi$). Both vector spaces in the above map have $\Q(\pi)$-structures which are unique up to homotheties. On one hand, define an element $t_{\sigma}\in\mathbb{A}_{\mathrm{f}}^{\times}$ via the cyclotomic character
\begin{equation*}
\begin{array}{ccccccccc}
\mathrm{Aut}(\C) & \to & \mathrm{Gal}(\overline{\Q}/\Q) & \to & \mathrm{Gal}(\Q(\mu_{\infty})/\Q) & \to & \widehat{\Z}^{\times} & \cong & \prod_p\Z_p^{\times}\\
\sigma & \mapsto & \sigma|_{\overline{\Q}} & \mapsto & \sigma|_{\Q(\mu_{\infty})} & \mapsto & & & t_{\sigma}
\end{array}
\end{equation*}
Set $t_{\sigma,n}=\mathrm{diag}[t_{\sigma}^{-(n-1)},t_{\sigma}^{-(n-2)},...,1]\in\mathrm{GL}_n(\mathbb{A}_{\mathrm{f}})$. On the Whittaker model $\mathcal{W}(\pi_{\mathrm{f}})$, we have the following action of $\mathrm{Aut}(\C)$:
\begin{equation*}
^{\sigma}W(g_{\mathrm{f}}):=\sigma(W(t_{\sigma,n}g_{\mathrm{f}})),\qquad \sigma\in\mathrm{Aut}(\C), W\in\mathcal{W}(\pi_{\mathrm{f}}).
\end{equation*}
This action gives a $\Q(\pi)$-structure of $\mathcal{W}(\pi_{\mathrm{f}})$. Also note that the action makes sense locally by replacing $t_{\sigma}$ with $t_{\sigma,v}$. On the other hand, $H^{b_n}(\mathfrak{g}_{\infty},K_{\infty}^0;V_{\pi}\otimes M_{\mu}^{\vee})(\epsilon)$ has a $\Q(\pi)$-structure induced from the $\Q(\pi)$-structure of the Betti cohomology $H^{b_n}_{\mathrm{Betti}}(S_n,\mathcal{M}_{\mu}^{\vee})$. 

The period is then defined by the difference of these two $\Q(\pi)$-structures in \eqref{cohomologymap}. More precisely, the period $p^{\epsilon}(\pi)\in\C^{\times}/\Q(\pi)^{\times}$ is defined by a complex number such that the normalized map
\begin{equation*}
\mathcal{F}^0_{\pi_{\mathrm{f}},\epsilon,[\pi_{\infty}]}=p^{\epsilon}(\pi)^{-1}\mathcal{F}_{\pi_{\mathrm{f}},\epsilon,[\pi_{\infty}]},
\end{equation*}
is $\mathrm{Aut}(\C)$-equivariant, that is for all $\sigma\in\mathrm{Aut}(\C)$ one has
\begin{equation*}
\sigma\circ\mathcal{F}^0_{\pi_{\mathrm{f}},\epsilon,[\pi_{\infty}]}=\mathcal{F}^0_{\pi_{\mathrm{f}}^{\sigma},\epsilon,[\pi_{\infty}^{\sigma}]}\circ\sigma.
\end{equation*}

For the Whittaker functions $W_{\phi,{\mathrm{f}}}$ chosen in Section \ref{subsection-choiceofvectors}, we define the cohomology classes attached to it by
\begin{equation*}
\theta_{\pi,\epsilon}=\mathcal{F}_{\pi_{\mathrm{f}},\epsilon,[\pi_{\infty}]}(W_{\phi,{\mathrm{f}}}),\qquad\theta^0_{\pi,\epsilon}=\mathcal{F}^0_{\pi_{\mathrm{f}},\epsilon,[\pi_{\infty}]}(W_{\phi,{\mathrm{f}}})=p^{\epsilon}(\pi)^{-1}\theta_{\pi,\epsilon}.
\end{equation*}

Let $K_{\mathrm{f}}$ be an open compact subgroup of $\mathrm{GL}_n(\mathbb{A}_{\mathrm{f}})$ which fixes $W_{\phi,{\mathrm{f}}}$. We then obtain a class $\theta_{\pi,\epsilon}$ in $H_c^{b_n}(S_n(K_{\mathrm{f}}),\mathcal{M}_{\mu}^{\vee})$ via the maps
\begin{equation*}
\begin{aligned}
H^{b_n}(\mathfrak{g}_{n},K_{n,\infty}^0;V_{\pi}^{K_{\mathrm{f}}}\otimes M_{\mu}^{\vee})(\epsilon)&\to H^{b_n}_{\mathrm{cusp}}(S_n(K_{\mathrm{f}}),\mathcal{M}_{\mu}^{\vee})(\pi_{\mathrm{f}}\otimes\epsilon)\\
&\to H_c^{b_n}(S_n(K_{\mathrm{f}}),\mathcal{M}_{\mu}^{\vee}).
\end{aligned}
\end{equation*}

The inclusion map $\iota:\mathrm{GL}_{n-1}\to\mathrm{GL}_n$ induces a proper map $\iota:F_{n-1}(R_{\mathrm{f}})\to S_n(K_{\mathrm{f}})$ with $R_{\mathrm{f}}:=\iota^{\ast}K_{\mathrm{f}}$ which further induces a map
\begin{equation*}
\iota^{\ast}:H_c^{\bullet}(S_n(K_{\mathrm{f}}),\mathcal{M}_{\mu}^{\vee})\to H_c^{\bullet}(F_{n-1}(R_{\mathrm{f}}),\iota^{\ast}\mathcal{M}_{\mu}^{\vee}).
\end{equation*}
This provide us cohomology classes
\begin{equation*}
\iota^{\ast}\theta_{\pi,\epsilon}\in H_c^{b_n}(F_{n-1}(R_{\mathrm{f}}),\iota^{\ast}\mathcal{M}_{\mu}^{\vee}),\qquad\iota^{\ast}\theta^0_{\pi,\epsilon}\in H_c^{b_n}(F_{n-1}(R_{\mathrm{f}}),\iota^{\ast}\mathcal{M}_{\mu}^{\vee}).
\end{equation*}

\begin{lem}
\label{pibehavior}
For any $\sigma\in\mathrm{Aut}(\C)$, we have
\begin{equation*}
    ^{\sigma}\theta^0_{\pi,\epsilon}=\theta^0_{\pi^{\sigma},\epsilon}.
\end{equation*}
\end{lem}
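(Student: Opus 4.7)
The plan is to reduce to a purely local Galois-equivariance check for the Whittaker function $W_{\phi,\mathrm{f}}$ chosen in Section \ref{subsection-choiceofvectors}.

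First I would unravel the two sides. By definition,
\begin{equation*}
\theta^0_{\pi,\epsilon}=\mathcal{F}^0_{\pi_{\mathrm{f}},\epsilon,[\pi_\infty]}(W_{\phi,\mathrm{f}}),\qquad \theta^0_{\pi^\sigma,\epsilon}=\mathcal{F}^0_{\pi^\sigma_{\mathrm{f}},\epsilon,[\pi^\sigma_\infty]}(W_{\phi^\sigma,\mathrm{f}}),
\end{equation*}
where $W_{\phi^\sigma,\mathrm{f}}$ denotes the Whittaker function produced by the recipe of Section \ref{subsection-choiceofvectors} applied to the representation $\pi^\sigma$ (with $\tau_1,\tau_2$ replaced by $\tau_1^\sigma,\tau_2^\sigma$ when the recipe refers to them). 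The key tool is the defining property of the period $p^\epsilon(\pi)$: the normalized map $\mathcal{F}^0_{\pi_{\mathrm{f}},\epsilon,[\pi_\infty]}$ is $\mathrm{Aut}(\C)$-equivariant, i.e.\ $\sigma\circ\mathcal{F}^0_{\pi_{\mathrm{f}},\epsilon,[\pi_\infty]}=\mathcal{F}^0_{\pi^\sigma_{\mathrm{f}},\epsilon,[\pi^\sigma_\infty]}\circ\sigma$. Applying this to $W_{\phi,\mathrm{f}}$ yields
\begin{equation*}
{}^\sigma\theta^0_{\pi,\epsilon}=\mathcal{F}^0_{\pi^\sigma_{\mathrm{f}},\epsilon,[\pi^\sigma_\infty]}({}^\sigma W_{\phi,\mathrm{f}}).
\end{equation*}
Thus the lemma reduces to the local identity ${}^\sigma W_{\phi,v}=W_{\phi^\sigma,v}$ at every finite place $v$.

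Next I would verify this identity place by place. At unramified $v\nmid \mathfrak{f}$ we have $W_{\phi,v}=W^0_{\pi,v}$ and $W_{\phi^\sigma,v}=W^0_{\pi^\sigma,v}$, and the equality ${}^\sigma W^0_{\pi,v}=W^0_{\pi^\sigma,v}$ is exactly what is built into the choice $t_{\pi^\sigma,v}=t_{\pi,v}$ recorded in Section \ref{subsection-choiceofvectors}. At ramified $v\mid \mathfrak{f}$, the conductor $\mathfrak{f}$ (and its local factors) is Galois-invariant, the element $t_{\tau,v}$ is Galois-invariant by the analogous conventions for $\tau_1,\tau_2$, the open compact $K_{0,v}(\mathfrak{f}_v)$ is the same for $\pi$ and $\pi^\sigma$, and $\psi$ is fixed. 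It then comes down to observing that the Galois twist ${}^\sigma W_{\phi,v}(g)=\sigma(W_{\phi,v}(t_{\sigma,n,v}\,g))$ has the same support $U_{n-1}(\Q_v)\,t_{\tau,v}\,K_{0,v}(\mathfrak{f}_v)$ and the same value formula $u\,t_{\tau,v}\gamma\mapsto \psi(u)$ on that support, using that $t_{\sigma,n,v}\in\mathrm{GL}_n(\Z_v)$ is diagonal with unit entries (hence lies in the Iwahori-type subgroup defining the support after conjugation) and that $\psi$ is $\mathrm{Aut}(\C)$-equivariant in the appropriate sense.

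The main obstacle I foresee is this last ramified bookkeeping: carefully tracking how the left-multiplication by $t_{\sigma,n,v}$ embedded in $\mathrm{GL}_n(\mathbb{A}_{\mathrm{f}})$ interacts with the support $U_{n-1}(\Q_v)\,t_{\tau,v}\,K_{0,v}(\mathfrak{f}_v)$ inside $\mathrm{GL}_{n-1}(\Q_v)$ (via the embedding $\iota$), and checking that $\sigma$ absorbs the factor $\psi(u)$ correctly. Once this local check is in place, assembling the finite tensor product gives ${}^\sigma W_{\phi,\mathrm{f}}=W_{\phi^\sigma,\mathrm{f}}$, and comparing with the expression for ${}^\sigma\theta^0_{\pi,\epsilon}$ above completes the proof.
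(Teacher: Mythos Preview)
Your proposal is correct and follows essentially the same approach as the paper: reduce via the $\mathrm{Aut}(\C)$-equivariance of $\mathcal{F}^0_{\pi_{\mathrm{f}},\epsilon,[\pi_\infty]}$ to the local identity ${}^\sigma W_{\phi,v}=W_{\phi^\sigma,v}$, which is immediate for $v\nmid\mathfrak{f}$ by the new-vector normalization and for $v\mid\mathfrak{f}$ by checking support and value on $U_{n-1}(\Q_v)t_{\tau,v}K_{0,v}(\mathfrak{f}_v)$. The paper carries out the ramified step by the explicit computation $\sigma(\psi_v(t_{\sigma,v}^{-1}u))=\psi_v(u)$, which is exactly the ``$\psi$ is $\mathrm{Aut}(\C)$-equivariant in the appropriate sense'' step you flag as the main obstacle.
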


\begin{proof}
The proof is similar to \cite[Proposition 3.15]{Raghuram2010}. By definition of the cohomology class and the period, we have
\[
^{\sigma}\theta^0_{\pi,\epsilon}={^{\sigma}\mathcal{F}}^0_{\pi_{\mathrm{f}},\epsilon,[\pi_{\infty}]}(W_{\phi,\mathrm{f}})=\mathcal{F}^0_{\pi^{\sigma}_{\mathrm{f}},\epsilon,[\pi_{\infty}]}({^{\sigma}W}_{\phi,\mathrm{f}}).
\]
It then suffices to show
\[
{^{\sigma}W}_{\phi,v}=W_{{\phi^{\sigma}},v},
\]
for all finite places $v$. For $v\nmid\mathfrak{f}$, it is clear by our normalization of the new vector. For $v|\mathfrak{f}$, note that the Whittaker function is uniquely determined by its restriction to $\mathrm{GL}_{n-1}(\Q_v)$ (\cite[Proposition 3.2]{JacquetShalika1983}). By our definition of $W_{\phi,v}$, the support of $^{\sigma}W_{\phi,v}$ when restricted to $\mathrm{GL}_{n-1}(\Q_v)$ is still the double coset $U_{n-1}(\Q_v)t_{\tau,v}K_{0,v}(\mathfrak{f}_v)$, and on this double coset we have 
\[
\begin{aligned}
^{\sigma}W_{\phi,v}(\iota(ut_{\tau,v}\gamma))&=\sigma\left(W_{\phi,v}\left(\iota\left(\left[\begin{array}{cccc}
t_{\sigma,v}^{-(n-1)} & & & \\
 & t_{\sigma,v}^{-(n-2)} & & \\
 & & \ddots & \\
 & & & t_{\sigma,v}^{-1}
\end{array}\right]ut_{\tau,v}\gamma\right)\right)\right)\\
&=\sigma(\psi_v(t_{\sigma,v}^{-1}u))=W_{\phi^{\sigma},v}(\iota(ut_{\tau,v}\gamma)).
\end{aligned}
\]
as desired.
\end{proof}

\subsection{Cohomology associated to the induced representation}
\label{cohomologyinduced}

Let $(\tau_1,V_1)$ (resp. $(\tau_2,V_2)$) be a regular algebraic cuspidal representation of $\mathrm{GL}_m(\mathbb{A})$ (resp. $\mathrm{GL}_k(\mathbb{A})$). Then there is a dominant integral pure weight $\mu_1\in X_0^+(T_m)$ (resp. $\mu_2\in X_0^+(T_k)$) such that $\tau_1\in\mathrm{Coh}(\mathrm{GL}_m,\mu_1^{\vee})$ (resp. $\tau_2\in\mathrm{Coh}(\mathrm{GL}_k,\mu_2^{\vee})$). We assume $\mu_1,\mu_2$ are balanced in the sense that there exists a balanced Kostant representative $w$ (see \cite[Definition 5.9]{HarderRaghuram2020book}) such that $\lambda=w^{-1}(\mu_1+\mu_2)$ is a dominant weight for $\mathrm{GL}_{n-1}$. Especially, we need to assume that $mk$ is even. In this subsection, we will consider the algebraic (un-normlized) induction $^a\mathrm{Ind}_{P_{m,k}(\mathbb{A})}^{\mathrm{GL}_{n-1}(\mathbb{A})}(\tau_1\otimes\tau_2)$. In particular, this equals the (normalized) induction $\mathrm{Ind}_{P_{m,k}(\mathbb{A})}^{\mathrm{GL}_{n-1}(\mathbb{A})}(\tau_1|\cdot|^{s_1-\frac{1}{2}}\otimes\tau_2|\cdot|^{-s_2+\frac{1}{2}})$ when specializing to $s_1=\frac{1-k}{2}$ and $s_2=\frac{1-m}{2}$.

Without loss of generality, we will assume $k$ is even and $m$ is an arbitrary integer. Everything in this subsection still holds for $m$ even and $k$ arbitrary by exchanging the place of $\tau_1,\tau_2$. Let $\epsilon'$ be a permissible sign for $\tau_1$. We fix a generator $[\tau_{1,\infty}]$ (resp. $[\tau_{2,\infty}]$) for the one-dimensional $\C$-vector space
\begin{equation*}
H^{b_m}(\mathfrak{g}_{m},K_{m,\infty}^0;V_{1,\infty}\otimes\mathcal{M}_{\mu_1}^{\vee})(\epsilon')\qquad \text{ resp. }H^{b_k}(\mathfrak{g}_{k},K_{k,\infty}^0;V_{2,\infty}\otimes\mathcal{M}_{\mu_2}^{\vee})(\epsilon').
\end{equation*}
By the Delorme's Lemma \cite[Theorem.III.3.3]{BorelWallach} (see also \cite[Section 7.3.2.3]{HarderRaghuram2020book}), we have the isomorphism
\begin{equation*}
\begin{aligned}
&H^{b_{n-1}}(\mathfrak{g}_{n-1},K^0_{n-1,\infty};{^a\mathrm{Ind}}_{P_{m,k}(\R)}^{\mathrm{GL}_{n-1}(\R)}(V_{1,\infty}\otimes V_{2,\infty})\otimes\mathcal{M}_{\lambda}^{\vee})(\epsilon')\\
\stackrel{\sim}\longrightarrow&H^{b_{m}}(\mathfrak{g}_{m},K^0_{m,\infty};V_{1,\infty}\otimes\mathcal{M}_{\mu_1}^{\vee})(\epsilon')\otimes H^{b_{k}}(\mathfrak{g}_{k},K^0_{k,\infty};V_{2,\infty}\otimes\mathcal{M}_{\mu_2}^{\vee})(\epsilon').
\end{aligned}
\end{equation*}
We thus obtain a generator
\begin{equation*}
[\tau_{1,\infty}]\otimes[\tau_{2,\infty}]\in H^{b_{n-1}}(\mathfrak{g}_{n-1},K^0_{n-1,\infty};{^a\mathrm{Ind}}_{P_{m,k}(\R)}^{\mathrm{GL}_{n-1}(\R)}(V_{1,\infty}\otimes V_{2,\infty})\otimes\mathcal{M}_{\lambda}^{\vee})(\epsilon').
\end{equation*}

Recall $\mathcal{W}(\tau_1)=\mathcal{W}(\tau_1,\psi^{-1})$ (resp. $\mathcal{W}(\tau_2)=\mathcal{W}(\tau_2,\psi^{-1})$) is the Whittaker model of $\tau_1$ (resp. $\tau_2$) associated to a fixed nontrivial additive character $\psi^{-1}$. We have the identification
\begin{equation*}
{^a\mathrm{Ind}}_{P_{m,k}(\mathbb{A})}^{\mathrm{GL}_{n-1}(\mathbb{A})}(V_1\otimes V_2)\stackrel{\sim}\longrightarrow{^a\mathrm{Ind}}_{P_{m,k}(\mathbb{A})}^{\mathrm{GL}_{n-1}(\mathbb{A})}(\mathcal{W}(\tau_1)\otimes\mathcal{W}(\tau_2)).
\end{equation*}
As in \eqref{cohomologymap}, we now have
\begin{equation}
\label{boundarymap}
\begin{aligned}
\mathcal{F}_{\tau_1,\tau_2,\epsilon'}:&{^a\mathrm{Ind}}_{P_{m,k}(\mathbb{A}_{\mathrm{f}})}^{\mathrm{GL}_{n-1}(\mathbb{A}_{\mathrm{f}})}(\mathcal{W}(\tau_{1,{\mathrm{f}}})\otimes\mathcal{W}(\tau_{2,{\mathrm{f}}}))\\
\longrightarrow& H^{b_{n-1}}(\mathfrak{g}_{n-1},K^0_{n-1,\infty};{^a\mathrm{Ind}}_{P_{m,k}(\mathbb{A})}^{\mathrm{GL}_{n-1}(\mathbb{A})}(V_{1}\otimes V_{2})\otimes M_{\lambda}^{\vee})(\epsilon')\\
\stackrel{\sim}\longrightarrow&H^{b_{n-1}}(\partial_{P_{m,k}}S_{n-1},\mathcal{M}_{\lambda}^{\vee})({^a\mathrm{Ind}}_{P_{m,k}(\mathbb{A})}^{\mathrm{GL}_{n-1}(\mathbb{A})}(V_{1}\otimes V_{2})\otimes\epsilon').
\end{aligned}
\end{equation}

The action of $\mathrm{Aut}(\C)$ on Whittaker models induces an action on the induced representation space ${^a\mathrm{Ind}}_{P_{m,k}(\mathbb{A}_{\mathrm{f}})}^{\mathrm{GL}_{n-1}(\mathbb{A}_{\mathrm{f}})}(\mathcal{W}(\tau_{1,{\mathrm{f}}})\otimes\mathcal{W}(\tau_{2,{\mathrm{f}}}))$. The vector spaces in the above map have $\Q(\tau_1,\tau_2)$-structures. We are going to define a period to normalize the above map such that it is $\mathrm{Aut}(\C)$-equivariant. Let $p^{\epsilon'}(\tau_1)$ (resp. $p^{\epsilon'}(\tau_2)$) be the period associated to $\tau_1$ (resp. $\tau_2$) defined as in Section \ref{cuspidalcohomology} and set
\begin{equation}
\label{normalizeboundary}
\mathcal{F}^0_{\tau_1,\tau_2,\epsilon'}:=p^{\epsilon'}(\tau_1)^{-1}p^{\epsilon'}(\tau_2)^{-1}\mathcal{F}_{\tau_1,\tau_2,\epsilon'}.
\end{equation}

\begin{lem}
The normalized map $\mathcal{F}^0_{\tau_1,\tau_2,\epsilon'}$ is $\mathrm{Aut}(\C)$-equivariant. That is, for all $\sigma\in\mathrm{Aut}(\C)$, one has
\begin{equation*}
\sigma\circ\mathcal{F}^0_{\tau_1,\tau_2,\epsilon'}=\mathcal{F}^0_{\tau_1,\tau_2,\epsilon'}\circ\sigma.
\end{equation*}
\end{lem}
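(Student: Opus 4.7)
The plan is to reduce the claim to the already-established $\mathrm{Aut}(\C)$-equivariance of the two individual normalized cuspidal maps $\mathcal{F}^{0}_{\tau_{i,\mathrm{f}},\epsilon',[\tau_{i,\infty}]}$ ($i=1,2$) from Section \ref{cuspidalcohomology}, which holds by the very definition of the periods $p^{\epsilon'}(\tau_1)$ and $p^{\epsilon'}(\tau_2)$. The key observation is that the map $\mathcal{F}_{\tau_1,\tau_2,\epsilon'}$ in \eqref{boundarymap} factors as the composition of (i) the parabolically induced tensor product $\mathrm{Ind}_{P_{m,k}(\A_{\mathrm{f}})}^{\GL_{n-1}(\A_{\mathrm{f}})}\bigl(\mathcal{F}_{\tau_{1,\mathrm{f}},\epsilon',[\tau_{1,\infty}]}\otimes \mathcal{F}_{\tau_{2,\mathrm{f}},\epsilon',[\tau_{2,\infty}]}\bigr)$, (ii) the Delorme isomorphism carrying $[\tau_{1,\infty}]\otimes[\tau_{2,\infty}]$ into the induced $(\mathfrak{g}_{n-1},K_{n-1,\infty}^0)$-cohomology, and (iii) the canonical identification of this with the boundary stratum cohomology $H^{b_{n-1}}(\partial_{P_{m,k}}S_{n-1},\mathcal{M}_\lambda^\vee)$.

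First I would observe that steps (ii) and (iii) are built from purely algebraic constructions on coefficient systems, sheaves, and local systems on the Borel--Serre boundary, all of which are defined over $\Q$; consequently the associated maps on Betti cohomology commute with $\mathrm{Aut}(\C)$ tautologically (compare \cite[Sec.~7.3.2.3]{HarderRaghuram2020book}). In particular the scalars $p^{\epsilon'}(\tau_1)p^{\epsilon'}(\tau_2)$ that appear when normalizing each factor in (i) propagate unchanged through (ii) and (iii), so equivariance of $\mathcal{F}^{0}_{\tau_1,\tau_2,\epsilon'}$ reduces to equivariance of the induced tensor-product map in (i).

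The one genuine compatibility to verify, and what I expect to be the main (though mild) technical point, is that the $\mathrm{Aut}(\C)$-action on $\mathrm{Ind}_{P_{m,k}(\A_{\mathrm{f}})}^{\GL_{n-1}(\A_{\mathrm{f}})}\bigl(\mathcal{W}(\tau_{1,\mathrm{f}})\otimes \mathcal{W}(\tau_{2,\mathrm{f}})\bigr)$ inherited from the ambient $\GL_{n-1}$-Whittaker action $W\mapsto \sigma(W(t_{\sigma,n-1}\,\cdot))$ agrees with the product of the individual actions on $\mathcal{W}(\tau_{1,\mathrm{f}})$ and $\mathcal{W}(\tau_{2,\mathrm{f}})$ defined via $t_{\sigma,m}$ and $t_{\sigma,k}$. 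A direct block-diagonal computation shows that, restricted to the Levi $M_{m,k}$, one has
\begin{equation*}
t_{\sigma,n-1}\bigr|_{M_{m,k}}=\mathrm{diag}\bigl[\,t_\sigma^{-k}\cdot t_{\sigma,m},\; t_{\sigma,k}\,\bigr],
\end{equation*}
so the two actions differ by the central scalar $t_\sigma^{-k}$ on the $\GL_m$-block. Since $\tau_1$ has trivial central character by assumption, this scalar acts trivially on $\mathcal{W}(\tau_{1,\mathrm{f}})$, and the two $\mathrm{Aut}(\C)$-structures coincide; this is exactly where the simplifying hypothesis on central characters is used.

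Once the compatibility is established, Lemma \ref{pibehavior} applied to each of $\tau_1$ and $\tau_2$ gives $\sigma\circ \mathcal{F}^{0}_{\tau_{i,\mathrm{f}},\epsilon',[\tau_{i,\infty}]}=\mathcal{F}^{0}_{\tau_{i,\mathrm{f}}^\sigma,\epsilon',[\tau_{i,\infty}^\sigma]}\circ\sigma$. Tensoring these, inducing, and applying steps (ii) and (iii) then yields $\sigma\circ\mathcal{F}^{0}_{\tau_1,\tau_2,\epsilon'}=\mathcal{F}^{0}_{\tau_1^\sigma,\tau_2^\sigma,\epsilon'}\circ\sigma$, which is the desired equivariance. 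The normalization \eqref{normalizeboundary} by $p^{\epsilon'}(\tau_1)^{-1}p^{\epsilon'}(\tau_2)^{-1}$ is precisely what is needed to absorb the discrepancy between the chosen archimedean generators $[\tau_{i,\infty}]$ and their $\sigma$-translates.
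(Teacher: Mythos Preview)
Your approach is essentially the same as the paper's: both factor $\mathcal{F}_{\tau_1,\tau_2,\epsilon'}$ as a composition $\mathfrak{F}_2\circ\mathfrak{F}_1$, where $\mathfrak{F}_1$ is the parabolic induction of the two cuspidal maps (your step (i)) and $\mathfrak{F}_2$ is the passage to boundary cohomology via Delorme and the Harder--Raghuram description of $H^\bullet(\partial_{P_{m,k}}S_{n-1},\mathcal{M}_\lambda^\vee)$ (your steps (ii)--(iii)); the paper cites \cite[Proposition~4.3]{HarderRaghuram2020book} for the $\Q$-rationality of $\mathfrak{F}_2$ and \cite[\S4.4.1]{Mahnkopf2005} for the overall scheme, exactly in line with what you outline.

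One small correction: the equivariance of the individual normalized maps $\mathcal{F}^0_{\tau_{i,\mathrm{f}},\epsilon',[\tau_{i,\infty}]}$ is the \emph{defining property} of the periods $p^{\epsilon'}(\tau_i)$ in Section~\ref{cuspidalcohomology}, not Lemma~\ref{pibehavior} (which concerns the specific class $\theta^0_{\pi,\epsilon}$ built from the particular Whittaker vector $W_{\phi,\mathrm{f}}$). Your explicit block-diagonal check that $t_{\sigma,n-1}|_{M_{m,k}}=\mathrm{diag}[t_\sigma^{-k}t_{\sigma,m},\,t_{\sigma,k}]$ and the use of the trivial-central-character hypothesis to kill the extra $t_\sigma^{-k}$ is a nice clarification that the paper leaves implicit in its reference to Mahnkopf.
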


\begin{proof}
The proof is similar to the argument in \cite[Section 4.4.1]{Mahnkopf2005}. The map \eqref{cohomologymap} induces
\[
\begin{aligned}
\mathfrak{F}_1:&{^a\mathrm{Ind}}_{P_{m,k}(\mathbb{A}_{\mathrm{f}})}^{\mathrm{GL}_{n-1}(\mathbb{A}_{\mathrm{f}})}(\mathcal{W}(\tau_{1,{\mathrm{f}}})\otimes\mathcal{W}(\tau_{2,{\mathrm{f}}}))\\
\longrightarrow &^a\mathrm{Ind}_{P_{m,k}(\mathbb{A}_{\mathrm{f}})}^{\mathrm{GL}_{n-1}(\mathbb{A}_{\mathrm{f}})}\left(H^{b_m}(S_m;V_{1}\otimes M_{\mu_1}^{\vee})(\epsilon')\otimes H^{b_k}(S_k;V_{2}\otimes M_{\mu_2}^{\vee})(\epsilon')\right),
\end{aligned}
\]
which is $\mathrm{Aut}(\C)$-equivariant once we normalize it by $p^{\epsilon'}(\tau_1)^{-1}p^{\epsilon'}(\tau_2)^{-1}$. By \cite[Proposition 4.3]{HarderRaghuram2020book} (and the same argument as in \cite[
(4.34)]{Mahnkopf2005}), we have a map
\[
\begin{aligned}
\mathfrak{F}_2:&^a\mathrm{Ind}_{P_{m,k}(\mathbb{A}_{\mathrm{f}})}^{\mathrm{GL}_{n-1}(\mathbb{A}_{\mathrm{f}})}\left(H^{b_m}(S_m;V_{1}\otimes M_{\mu_1}^{\vee})(\epsilon')\otimes H^{b_k}(S_k;V_{2}\otimes M_{\mu_2}^{\vee})(\epsilon')\right)\\
\longrightarrow&H^{b_{n-1}}(\partial_{P_{m,k}}\widetilde{S}_{n-1},\mathcal{M}_{\lambda}^{\vee})({^a\mathrm{Ind}}_{P_{m,k}(\mathbb{A})}^{\mathrm{GL}_{n-1}(\mathbb{A})}(V_{1}\otimes V_{2})\otimes\epsilon')
\end{aligned}
\]
which is defined over $\Q$. A straightforward verification shows that $\mathcal{F}_{\tau_1,\tau_2,\epsilon'}=\mathfrak{F}_2\circ\mathfrak{F}_1$ which is $\mathrm{Aut}(\C)$-equivariant after normalized by $p^{\epsilon'}(\tau_1)^{-1}p^{\epsilon'}(\tau_2)^{-1}$.
\end{proof}

For the function $f_{\tau_1,\tau_2,{\mathrm{f}}}^{\mathcal{W}}$ chosen in Section \ref{subsection-choiceofvectors}, we define the cohomology classes attached to it by
\begin{equation*}
\theta_{\tau_1,\tau_2,\epsilon'}=\mathcal{F}_{\tau_1,\tau_2,\epsilon'}(f_{\tau_1,\tau_2,{\mathrm{f}}}^{\mathcal{W}}),\,\,\,\theta^0_{\tau_1,\tau_2,\epsilon'}=\mathcal{F}^0_{\tau_1,\tau_2,\epsilon'}(f_{\tau_1,\tau_2,{\mathrm{f}}}^{\mathcal{W}})=p^{\epsilon'}(\tau_1)^{-1}p^{\epsilon'}(\tau_2)^{-1}\theta_{\tau_1,\tau_2,\epsilon'}.
\end{equation*}

\begin{lem}
\label{sectionbehavior}
For any $\sigma\in\mathrm{Aut}(\C)$, we have
\begin{equation*}
^{\sigma}\theta^0_{\tau_1,\tau_2,\epsilon'}=\theta^0_{\tau_1^{\sigma},\tau_2^{\sigma},\epsilon'}.
\end{equation*}
\end{lem}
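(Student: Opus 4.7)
The plan is to reduce the statement to a local identity on the finite-part section, mirroring the strategy used in Lemma \ref{pibehavior}. By the preceding lemma, the normalized map $\mathcal{F}^0_{\tau_1,\tau_2,\epsilon'}$ is $\mathrm{Aut}(\C)$-equivariant, so
\[
{}^\sigma\theta^0_{\tau_1,\tau_2,\epsilon'}
= {}^\sigma\bigl(\mathcal{F}^0_{\tau_1,\tau_2,\epsilon'}(f^{\mathcal{W}}_{\tau_1,\tau_2,\mathrm{f}})\bigr)
= \mathcal{F}^0_{\tau_1^\sigma,\tau_2^\sigma,\epsilon'}\bigl({}^\sigma f^{\mathcal{W}}_{\tau_1,\tau_2,\mathrm{f}}\bigr).
\]
Comparing with the definition of $\theta^0_{\tau_1^\sigma,\tau_2^\sigma,\epsilon'}$, the desired equality reduces to the local statement ${}^\sigma f^{\mathcal{W}}_{\tau_1,\tau_2,v} = f^{\mathcal{W}}_{\tau_1^\sigma,\tau_2^\sigma,v}$ at every finite place $v$, where the $\mathrm{Aut}(\C)$-action on the induced representation of Whittaker models is the one transported from its $\Q(\tau_1,\tau_2)$-structure.

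The crucial observation for the local check is that at the specialization $(s_1,s_2) = (\tfrac{1-k}{2},\tfrac{1-m}{2})$ both exponents $s_1 + \tfrac{k-1}{2}$ and $-s_2 + \tfrac{1-m}{2}$ vanish, so the transcendental $|\det|^s$-factors in the formulas of Section \ref{subsection-choiceofvectors} disappear and $f^{\mathcal{W}}_{\tau_1,\tau_2,v}$ is supported on $P_{m,k}(\Q_v)K_{0,v}(\mathfrak{f}_{\tau_1,\tau_2,v})$ (or all of $\mathrm{GL}_{n-1}(\Q_v)$ in the unramified case) and equals $W^0_{\tau_1,v}(m_1)W^0_{\tau_2,v}(m_2)$ on the Levi. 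For $v \nmid \mathfrak{f}_{\tau_1,\tau_2}$, the identity then follows at once from the already-established $\mathrm{Aut}(\C)$-behavior ${}^\sigma W^0_{\tau_i,v} = W^0_{\tau_i^\sigma,v}$ of the normalized new vectors. For $v \mid \mathfrak{f}_{\tau_1,\tau_2}$, one notes that $t_{\sigma,n-1,v}$ normalizes $K_{0,v}(\mathfrak{f}_{\tau_1,\tau_2,v})$, so the support of ${}^\sigma f^{\mathcal{W}}_{\tau_1,\tau_2,v}$ agrees with that of $f^{\mathcal{W}}_{\tau_1^\sigma,\tau_2^\sigma,v}$, and the value on the support is computed by the same two identities ${}^\sigma W^0_{\tau_i,v} = W^0_{\tau_i^\sigma,v}$.

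The main technical obstacle is the compatibility of the twist $t_{\sigma,n-1}$ acting on the induced representation with the twists $t_{\sigma,m}$ and $t_{\sigma,k}$ acting on the Whittaker models of $\tau_1$ and $\tau_2$ individually. Writing $t_{\sigma,n-1,v}$ in block form with respect to $M_{m,k} \cong \mathrm{GL}_m \times \mathrm{GL}_k$, one obtains $t_{\sigma,k,v}$ in the lower block and a scalar multiple of $t_{\sigma,m,v}$ in the upper block; the scalar discrepancy is absorbed by the (trivial, by assumption) central characters of $\tau_1$ and $\tau_2$, so that the two prescriptions for the $\mathrm{Aut}(\C)$-action agree on the decomposable sections we use. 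Once this bookkeeping is carried out—exactly as in the parallel discussion of \cite[Section 4.4.1]{Mahnkopf2005} and \cite[Proposition 3.15]{Raghuram2010}—the lemma follows.
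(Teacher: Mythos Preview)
Your proposal is correct and follows exactly the same approach as the paper: use the $\mathrm{Aut}(\C)$-equivariance of $\mathcal{F}^0_{\tau_1,\tau_2,\epsilon'}$ (the preceding lemma) to reduce to the local identity ${}^\sigma f^{\mathcal{W}}_{\tau_1,\tau_2,v} = f^{\mathcal{W}}_{\tau_1^\sigma,\tau_2^\sigma,v}$, then verify this from the definition of the section and the normalization of the new vectors. The paper's own proof is quite terse (``follows immediately by our definition of the function $f^{\mathcal{W}}_{\tau_1,\tau_2,v}$ and the normalization of the new vector''), whereas you have usefully spelled out the vanishing of the exponents at the specialization point and the compatibility of $t_{\sigma,n-1}$ with the block twists $t_{\sigma,m}$, $t_{\sigma,k}$ via the trivial central characters.
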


\begin{proof}
By definition of the cohomology class and the period, it suffices to prove
\[
^{\sigma}f^{\mathcal{W}}_{\tau_1,\tau_2,v}=f^{\mathcal{W}}_{\tau_1^{\sigma},\tau_2^{\sigma},\epsilon'}
\]
for all finite places $v$. This follows immediately by our definition of the function $f^{\mathcal{W}}_{\tau_1,\tau_2,v}$ and the normalization of the new vector.
\end{proof}

\subsection{The Eisenstein cohomology}

Recall that we have an Eisenstein operator
\begin{equation*}
\mathrm{Eis}:\mathrm{Ind}^{\mathrm{GL}_{n-1}(\mathbb{A})}_{P_{m,k}(\mathbb{A})}\left(\tau_1|\cdot|^{s_1-\frac{1}{2}}\otimes\tau_2|\cdot|^{-s_2+\frac{1}{2}}\right)\to\mathcal{A}(\mathrm{GL}_{n-1}(\Q)\backslash\mathrm{GL}_{n-1}(\mathbb{A})),
\end{equation*}
sending a section $f_{s_1,s_2}$ to an Eisenstein series $E(h;f_{s_1,s_2})$. On the cohomology side, we have an Eisenstein operator
\begin{equation*}
\mathrm{Eis}:H^{\bullet}(\partial_{P_{m,k}}\widetilde{S}_{n-1},\mathcal{M}_{\lambda}^{\vee})\to H^{\bullet}(\widetilde{S}_{n-1},\mathcal{M}_{\lambda}^{\vee}),
\end{equation*}
defined by (using the notation of Lie algebra cocycles as in Section \ref{archimedeanpair})
\begin{equation*}
\begin{aligned}
\phi&=\sum_{\boldsymbol{i},b}\omega_{\boldsymbol{i}}\otimes f_{\boldsymbol{i},b}\otimes n_b\\
&\in H^{\bullet}(\mathfrak{g}_{n-1},K_{n-1,\infty};
{^a\mathrm{Ind}}_{P_{m,k}(\mathbb{A})}^{\mathrm{GL}_{n-1}(\mathbb{A})}(\tau_1\otimes\tau_2)\otimes M_{\lambda}^{\vee})\\
\mapsto  \mathrm{Eis}(\phi)&:=\sum_{\gamma\in P_{m,k}(\Q)\backslash\mathrm{GL}_{n-1}(\Q)}\gamma\circ\phi=\sum_{\boldsymbol{i},b}\omega_{\boldsymbol{i}}\otimes E(f_{\boldsymbol{i},b})\otimes n_b.
\end{aligned}
\end{equation*}
Here $f_{\boldsymbol{i},b}\in{^a\mathrm{Ind}}_{P_{m,k}(\mathbb{A})}^{\mathrm{GL}_{n-1}(\mathbb{A})}(\tau_1\otimes\tau_2)$ and 
\begin{equation}
\label{Eis}
E(f_{\boldsymbol{i},b})(g):=\sum_{\gamma\in P_{m,k}(\Q)\backslash\mathrm{GL}_{n-1}(\Q)}f_{\boldsymbol{i},b}(\gamma g)\in\mathcal{A}(\mathrm{GL}_{n-1}(\Q)\backslash\mathrm{GL}_{n-1}(\mathbb{A})).
\end{equation}

We compose the Eisenstein operator with the map \eqref{boundarymap}, to obtain 
\begin{equation*}
\mathrm{Eis}\circ\mathcal{F}_{\tau_1,\tau_2,\epsilon'}:{^a\mathrm{Ind}}_{P_{m,k}(\mathbb{A}_{\mathrm{f}})}^{\mathrm{GL}_{n-1}(\mathbb{A}_{\mathrm{f}})}(\mathcal{W}(\tau_{1,{\mathrm{f}}})\otimes\mathcal{W}(\tau_{2,{\mathrm{f}}}))\to H^{b_{n-1}}(S_{n-1}(R_{\mathrm{f}}),\mathcal{M}_{\lambda}^{\vee}),
\end{equation*}
sending the section
\begin{equation*}
f^{\mathcal{W}}_{\tau_1,\tau_2,{\mathrm{f}}}\mapsto[\theta_{\tau_1,\tau_2,\epsilon'}]\in H^{b_{n-1}}(S_{n-1}(R_{\mathrm{f}}),\mathcal{M}_{\lambda}^{\vee}).
\end{equation*}
We similarly denote
\begin{equation*}
[\theta_{\tau_1,\tau_2,\epsilon'}^0]:=\mathrm{Eis}\circ\mathcal{F}^{\circ}_{\tau_1,\tau_2,\epsilon'}(f^{\mathcal{W}}_{\tau_1,\tau_2,{\mathrm{f}}})\in H^{b_{n-1}}(S_{n-1}(R_{\mathrm{f}}),\mathcal{M}_{\lambda}^{\vee}),
\end{equation*}
for the normalized cohomology class when composite with the map \eqref{normalizeboundary}. 

The canonical map $p: F_{n-1}(R_{\mathrm{f}})\to S_{n-1}(R_{\mathrm{f}})$ induces a mapping
\begin{equation*}
p^{\ast}:H^{b_{n-1}}(S_{n-1}(R_{\mathrm{f}}),\mathcal{M}_{\lambda}^{\vee})\to H^{b_{n-1}}(F_{n-1}(R_{\mathrm{f}}),\mathcal{M}_{\lambda}^{\vee}),
\end{equation*}
which provide us cohomology classes
\begin{equation*}
p^{\ast}[\theta_{\tau_1,\tau_2,\epsilon'}]\in H^{b_{n-1}}(F_{n-1}(R_{\mathrm{f}}),\mathcal{M}_{\lambda}^{\vee}),\qquad p^{\ast}[\theta_{\tau_1,\tau_2,\epsilon'}^0]\in H^{b_{n-1}}(F_{n-1}(R_{\mathrm{f}}),\mathcal{M}_{\lambda}^{\vee}).
\end{equation*}

\begin{lem}
\label{taubehavior}
For any $\sigma\in\mathrm{Aut}(\C)$, we have
\begin{equation*}
^{\sigma}[\theta_{\tau_1,\tau_2,\epsilon'}^0]=[\theta_{\tau^{\sigma}_1,\tau^{\sigma}_2,\epsilon'}^0].
\end{equation*}
\end{lem}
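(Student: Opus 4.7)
The plan is to reduce Lemma \ref{taubehavior} to Lemma \ref{sectionbehavior} via the $\mathrm{Aut}(\C)$-equivariance of the Eisenstein operator on cohomology. Unwinding the definition,
\begin{equation*}
[\theta^0_{\tau_1,\tau_2,\epsilon'}] \;=\; \mathrm{Eis}\bigl(\theta^0_{\tau_1,\tau_2,\epsilon'}\bigr),
\end{equation*}
where $\theta^0_{\tau_1,\tau_2,\epsilon'}$ is the normalized boundary class produced in Section~\ref{cohomologyinduced}. Granting that $\mathrm{Eis}$ commutes with the $\sigma$-action at our evaluation point, the conclusion follows from a three-step chain of equalities:
\begin{equation*}
{}^\sigma[\theta^0_{\tau_1,\tau_2,\epsilon'}] \;=\; \mathrm{Eis}\bigl({}^\sigma\theta^0_{\tau_1,\tau_2,\epsilon'}\bigr) \;=\; \mathrm{Eis}\bigl(\theta^0_{\tau_1^\sigma,\tau_2^\sigma,\epsilon'}\bigr) \;=\; [\theta^0_{\tau_1^\sigma,\tau_2^\sigma,\epsilon'}],
\end{equation*}
where the first and last steps use the definitions and the middle step is Lemma~\ref{sectionbehavior}.

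The key technical input is therefore the $\mathrm{Aut}(\C)$-equivariance of the Eisenstein operator
\begin{equation*}
\mathrm{Eis}\colon H^{b_{n-1}}(\partial_{P_{m,k}}\widetilde{S}_{n-1},\mathcal{M}_\lambda^\vee) \longrightarrow H^{b_{n-1}}(\widetilde{S}_{n-1},\mathcal{M}_\lambda^\vee)
\end{equation*}
on the $({}^a\mathrm{Ind}\, \tau_1\otimes\tau_2)$-isotypic component. Since the Eisenstein operator on automorphic forms is defined by summation over $P_{m,k}(\Q)\backslash \mathrm{GL}_{n-1}(\Q)$, an a priori $\Q$-rational operation (compare \eqref{Eis}), the substance of the claim is that the induced map on cohomology preserves the $\Q$-structures coming from Betti cohomology of the locally symmetric spaces. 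This is a version of the Harder--Franke rationality theorem for Eisenstein cohomology; I would invoke the formulation in \cite[Chapter~7]{HarderRaghuram2020book}, and transport it to our setting along the lines of the analogous $P_{n-2,1}$ argument in \cite[Section~4.4.2]{Mahnkopf2005} and \cite[Section~3.3.3]{Raghuram2010}. The balanced Kostant representative $w$ introduced in Section~\ref{cohomologyinduced} identifies our cohomology class as lying in the stratum of boundary cohomology where Franke's theorem provides a $\Q$-rational lift via $\mathrm{Eis}$.

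The hard part will be verifying that the specific evaluation point $(s_1,s_2)=\bigl(\tfrac{1-k}{2},\tfrac{1-m}{2}\bigr)$ is a regular point for the Eisenstein series, so that $E(h,f_{s_1,s_2})$ is holomorphic there (hence $\mathrm{Eis}$ is defined on our class, not just as a meromorphic extension), and that the resulting class actually sits in the rational subspace supplied by the Harder--Franke theorem. Under the weight assumptions in Theorem~\ref{mainthm}, which were imposed precisely so that $\lambda = w^{-1}(\mu_1+\mu_2)$ is dominant and the $L$-value under consideration is critical, this regularity should follow from the structure of the constant term of $E$ along $P_{m,k}$; the check parallels \cite{Mahnkopf1998,Mahnkopf2005,Raghuram2010} but requires a careful adaptation to the $(m,k)$-parabolic rather than the $(n-2,1)$-parabolic or the maximal case. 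Once this rationality is established, the proof of Lemma~\ref{taubehavior} reduces to the one-line computation displayed above.
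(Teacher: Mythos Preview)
Your proposal is correct and follows essentially the same route as the paper: reduce to Lemma~\ref{sectionbehavior} via the $\mathrm{Aut}(\C)$-equivariance of the Eisenstein operator on boundary cohomology, with the displayed three-step chain as the endgame. The paper's proof differs only in the precise bookkeeping: rather than invoking a general Harder--Franke rationality statement, it pins down the relevant subspace $H^{\bullet}_{\succeq}(\partial_{P_{m,k}}S_{n-1},\mathcal{M}^{\vee}_{\lambda})$ using the decomposition \cite[(5.18)]{HarderRaghuram2020book}, observes that the balanced-weight hypothesis places $\theta^0_{\tau_1,\tau_2,\epsilon'}$ there, and then transports Mahnkopf's equivariance argument \cite[Proposition~4.6]{Mahnkopf2005} (so \cite[\S\S4.5--4.7]{Mahnkopf2005} rather than \S4.4.2) to this more general parabolic; the regularity issue you flag is handled by excluding the residual case when $m=k$ via \cite[Proposition~7.10]{HarderRaghuram2020book}.
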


\begin{proof}
We generalize the argument in \cite[Section 4.5, 4.6, 4.7]{Mahnkopf2005} to Eisenstein series induced from any parabolics using the supplement of \cite{HarderRaghuram2020book}. 

As in \cite[Section 4.5]{Mahnkopf2005}, we characterize a subspace
\[
H^{\bullet}_{\succeq}(\partial_{P_{m,k}}S_{n-1},\mathcal{M}^{\vee}_{\lambda})\subset H^{\bullet}_{\mathrm{cusp}}(\partial_{P_{m,k}}S_{n-1},\mathcal{M}^{\vee}_{\lambda})
\]
such that $\mathrm{Eis}$ is well defined on $H^{\bullet}_{\succeq}(\partial_{P_{m,k}}S_{n-1},\mathcal{M}^{\vee}_{\lambda})$. More precisely, by \cite[(5.18)]{HarderRaghuram2020book} (which is a generalization of \cite[Theorem 4.2]{Mahnkopf2005} for general parabolic subgroups), we can write
\[
\begin{aligned}
H^{\bullet}_{\mathrm{cusp}}(\partial_{P_{m,k}}S_{n-1},\mathcal{M}^{\vee}_{\lambda})=H^{\bullet}_{\succeq}(\partial_{P_{m,k}}S_{n-1},\mathcal{M}^{\vee}_{\lambda})\oplus H^{\bullet}_{\preceq}(\partial_{P_{m,k}}S_{n-1},\mathcal{M}^{\vee}_{\lambda}).
\end{aligned}
\]
That is we are taking $H^{\bullet}_{\succeq}$ (resp. $H^{\bullet}_{\preceq}$) to be the contribution of the direct summand in the second (resp. third) line of \cite[(5.18)]{HarderRaghuram2020book}. The same proof in \cite[Proposition 4.6]{Mahnkopf2005} implies that 
\[
\mathrm{Eis}(\phi)^{\sigma}=\mathrm{Eis}(\phi^{\sigma}),\qquad \sigma\in\mathrm{Aut}(\C).
\]
for $\phi\in H^{\bullet}_{\succeq}(\partial_{P_{m,k}}S_{n-1},\mathcal{M}^{\vee}_{\lambda})$ where, in the case $m=k$, we assume $\mathrm{Eis}(\phi)$ is not a residual Eisenstein series.

We remark that our assumption that the weights $\mu_1,\mu_2$ are balanced implies that $\theta_{\tau_1,\tau_2}\in H^{\bullet}_{\succeq}(\partial_{P_{m,k}}S_{n-1},\mathcal{M}^{\vee}_{\lambda})$. Furthermore, by \cite[Proposition 7.10]{HarderRaghuram2020book} (exchanging $\tau_1,\tau_2$ and $m,k$ if necessary), $E(f^{\mathcal{W}}_{\tau_1,\tau_2})$ is not a residual Eisenstein series when $m=k$. Therefore, by Lemma \ref{sectionbehavior},
\[
\begin{aligned}
^{\sigma}[\theta_{\tau_1,\tau_2,\epsilon'}^0]=\mathrm{Eis}({^{\sigma}\theta}^0_{\tau_1,\tau_2,\epsilon'})=\mathrm{Eis}(\theta^0_{\tau_1^{\sigma},\tau_2^{\sigma},\epsilon'})=[\theta^0_{\tau_1^{\sigma},\tau_2^{\sigma},\epsilon'}],
\end{aligned}
\]
as desired. 
\end{proof}

\section{The Cohomological Interpretation of the Integral}
\label{cohomologicalinterpretation}

We are now going to provide a cohomological interpretation of the global integral \eqref{eq-globalintegral}. We keep all the notations and assumptions as in previous sections and further fix the permissible signs $\epsilon,\epsilon'$ such that $\epsilon=(-1)^n\epsilon'$. Note that this uniquely determines $\epsilon,\epsilon'$ since at least one of $n,m,k$ is odd.

\subsection{The pairing}
\label{pairing}

In Section \ref{cohomologyclass}, we obtain two cohomology classes 
\begin{equation*}
\iota^{\ast}\theta_{\pi,\epsilon}\in H_c^{b_n}(F_{n-1}(R_{\mathrm{f}}),\iota^{\ast}\mathcal{M}_{\mu}^{\vee}),\qquad p^{\ast}[\theta_{\tau_1,\tau_2,\epsilon'}]\in H^{b_{n-1}}(F_{n-1}(R_{\mathrm{f}}),\mathcal{M}_{\lambda}^{\vee}).
\end{equation*}
We review the pairing of these two cohomology classes defined in \cite[Section 5.1.2]{Mahnkopf2005} and \cite[Section 3.2.4]{Raghuram2010}.

We assume $\mu^{\vee}\succ\lambda$. That is, $a_1\geq b_1\geq a_2\geq b_2\geq...\geq b_{n-1}\geq a_n$ if we write $\mu^{\vee}=(a_1,...,a_n)$ and $\lambda=(b_1,...,b_{n-1})$. This assumption implies that $M_{\lambda}$ appears (with multiplicity one) in $\iota^{\ast}M_{\mu}^{\vee}$.

The $\mathrm{GL}_{n-1}$-equivariant pairing
\begin{equation*}
\langle\cdot,\cdot\rangle:\iota^{\ast}M_{\mu}^{\vee}\times M_{\lambda}^{\vee}\to\C
\end{equation*}
induces the pairing of sheaves
\begin{equation*}
\langle\cdot,\cdot\rangle:\iota^{\ast}\mathcal{M}_{\mu}^{\vee}\otimes\mathcal{M}_{\lambda}^{\vee}\to\underline{\C}
\end{equation*}
and thus (composite with the cup/wedge product) provides the pairing 
\begin{equation*}
\begin{aligned}
\langle\cdot,\cdot\rangle_{\mathcal{C}}:H_c^{b_n}(F_{n-1}(R_{\mathrm{f}}),\iota^{\ast}\mathcal{M}_{\mu}^{\vee})\times H^{b_{n-1}}(F_{n-1}(R_{\mathrm{f}}),\mathcal{M}_{\lambda}^{\vee})&\to H_c^{d_{n-1}}(F_{n-1}(R_{\mathrm{f}}),\underline{\C})\\
&\to \C
\end{aligned}
\end{equation*}
where $\mathcal{C}$ is the Harder-Mahnkopf cycle (associated to $R_f$) as in \cite[Section 5.1.1]{Mahnkopf2005} and \cite[Section 3.2.3]{Raghuram2010}.

We write
\begin{equation*}
\langle\theta_{\pi,\epsilon},[\theta_{\tau_1,\tau_2,\epsilon'}]\rangle:=\langle\iota^{\ast}\theta_{\pi,\epsilon},p^{\ast}[\theta_{\tau_1,\tau_2,\epsilon'}]\rangle_{\mathcal{C}}=\int_{\mathcal{C}}\iota^{\ast}\theta_{\pi,\epsilon}\wedge p^{\ast}[\theta_{\tau_1,\tau_2,\epsilon'}],
\end{equation*}
and observe that it is stable under the action of $\pi_0(\mathrm{GL}_{n-1})$ due to our choice of $\epsilon,\epsilon'$.

\subsection{The archimedean pairing and the nonvanishing hypothesis}
\label{archimedeanpair}

Denote $\mathfrak{k}_{n}$ be the Lie algebra of $K_{n,\infty}$. We fix a basis $\{\omega_i'\}$ of $(\mathfrak{g}_n/\mathfrak{k}_{n})^{\ast}$ and a basis $\{\omega_i\}$ of $(\mathfrak{g}_{n-1}/\mathfrak{k}_{n-1})^{\ast}$ such that $\iota^{\ast}\omega_i'=\omega_i$ for $1\leq i\leq \frac{n(n-1)}{2}$ and $\iota^{\ast}\omega_i'=0$ if $i>\frac{n(n-1)}{2}$, where $\iota:\mathfrak{g}_{n-1}/\mathfrak{k}_{n-1}\to\mathfrak{g}_{n}/\mathfrak{k}_{n}$ is the inclusion. Let $\{m_a\}$ (resp. $\{n_b\}$) be a $\Q$-basis for $M_{\mu}^{\vee}$ (resp. $M_{\lambda}^{\vee}$). 

Our fixed generator $[\pi_{\infty}]$ is represented by a $K_{n,\infty}^0$-invariant element in $\wedge^{b_n}(\mathfrak{g}_{n}/\mathfrak{k}_{n})^{\ast}\otimes\mathcal{W}(\pi_{\infty})\otimes M_{\mu}^{\vee}$:
\begin{equation*}
[\pi_{\infty}]=\sum_{\boldsymbol{j}=j_1<...<j_{b_n}}\sum_a\omega'_{\boldsymbol{j}}\otimes W_{\infty,\boldsymbol{j},a}\otimes m_a,
\end{equation*}
where $W_{\infty,\boldsymbol{j},a}\in\mathcal{W}(\pi_{\infty})$ is an archimedean Whittaker function.

Similarly, our fixed generator $[\tau_{1,\infty}]\otimes[\tau_{2,\infty}]$ is a represented by a $K_{n-1,\infty}^0$-invariant element in $\wedge^{b_{n-1}}(\mathfrak{g}_{n-1}/\mathfrak{k}_{n-1})^{\ast}\otimes{^a\mathrm{Ind}}_{P_{m,k}(\R)}^{\mathrm{GL}_{n-1}(\R)}(\mathcal{W}(\tau_{1,\infty})\otimes\mathcal{W}(\tau_{2,\infty}))\otimes M_{\lambda}^{\vee}$:
\begin{equation*}
[\tau_{1,\infty}]\otimes[\tau_{2,\infty}]=\sum_{\boldsymbol{i}=i_1<...<i_{b_{n-1}}}\sum_b\omega_{\boldsymbol{i}}\otimes f_{\infty,\boldsymbol{i},b}\otimes n_b,
\end{equation*}
where $f_{\infty,\boldsymbol{i},b}\in{^a\mathrm{Ind}}_{P_{m,k}(\R)}^{\mathrm{GL}_{n-1}(\R)}(\mathcal{W}(\tau_{1,\infty})\otimes\mathcal{W}(\tau_{2,\infty}))$.

We define a pairing at infinity by
\begin{equation*}
\left\langle[\pi_{\infty}],[\tau_{1,\infty}]\otimes[\tau_{2,\infty}]\right\rangle=\sum_{\boldsymbol{i,j}}\mathrm{sgn}(\boldsymbol{i},\boldsymbol{j})\sum_{a,b}\langle n_b,m_a\rangle \Psi_{\infty}(W_{\infty,\boldsymbol{j},a},f_{\infty,\boldsymbol{i},b}).
\end{equation*}

We need the following nonvanishing hypothesis 
\begin{itemize}
\item{$\left\langle[\pi_{\infty}],[\tau_{1,\infty}]\otimes[\tau_{2,\infty}]\right\rangle\neq 0$},
\end{itemize}
and then we can set an archimedean period
\begin{equation}
\label{archimedeanperiod}
p_{\infty}(\pi,\tau_1,\tau_2):=p_{\infty}(\mu,\mu_1,\mu_2):=\frac{1}{\left\langle[\pi_{\infty}],[\tau_{1,\infty}]\otimes[\tau_{2,\infty}]\right\rangle}.
\end{equation}

This assumption, also appeared in \cite[Hypothesis 3.10]{Raghuram2010} and \cite{Mahnkopf1998, Mahnkopf2005}, is a standard technical problem in the study of special $L$-values (see also \cite{RaghuramShahidi, RaghuramShahidi2008, KastenSchmidt, KazhdanMazurSchmidt}). In the case for special $L$-values of $\mathrm{GL}_n\times\mathrm{GL}_{n-1}$, the nonvanishing hypothesis is proved by \cite{Sun}. See \cite[Theorem 3.8]{GrobnerHarris} for an explanation on how one can deduce the nonvanishing hypothesis from Sun's result \cite[Theorem A]{Sun}. In the case treated in this paper, the above nonvanishing hypothesis is proved in \cite{LLS22}.

\subsection{The main identity}

We now give a cohomological interpretation of the integral \eqref{eq-globalintegral} and \eqref{integralrepresentation}.

\begin{prop}
\label{mainidentity}
We keep all the assumptions as previous, then
\begin{equation*}
    \langle\theta^0_{\pi,\epsilon},[\theta^0_{\tau_1,\tau_2,\epsilon'}]\rangle=\frac{\mathrm{vol}(K_0(\mathfrak{f}))}{p^{\epsilon}(\pi)p^{\epsilon'}(\tau_1)p^{\epsilon'}(\tau_2)p_{\infty}(\mu,\mu_1,\mu_2)}\cdot\frac{L^{S}(\frac{1-k}{2},\pi\times\tau_1)L^{S}(\frac{1-m}{2},\widetilde{\pi}\times\widetilde{\tau_2})}{L^{S}(1+\frac{1-n}{2},\tau_1\times\widetilde{\tau_2})}.
\end{equation*}
\end{prop}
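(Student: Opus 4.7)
The plan is to interpret both sides of the identity as an integral over the Harder--Mahnkopf cycle $\mathcal{C}$ and then reduce the integral to the global zeta integral of Section \ref{section-integral} via the explicit integral representation in \eqref{integralrepresentation}.

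First, I would unwind the cohomology classes into explicit Lie algebra cocycle representatives, using the generators $[\pi_\infty]$ and $[\tau_{1,\infty}]\otimes[\tau_{2,\infty}]$ fixed in Section \ref{archimedeanpair}. Concretely, $\iota^{\ast}\theta_{\pi,\epsilon}$ is represented (at the archimedean place) by the cocycle $\sum_{\boldsymbol{j},a}\iota^{\ast}\omega'_{\boldsymbol{j}}\otimes \phi_{\boldsymbol{j},a}\otimes m_a$, where $\phi_{\boldsymbol{j},a}$ is the cusp form in $\pi$ corresponding to $W_{\infty,\boldsymbol{j},a}\otimes W_{\phi,\mathrm{f}}$. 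By the construction in Section \ref{cohomologyinduced}, $p^{\ast}[\theta_{\tau_1,\tau_2,\epsilon'}]$ is represented by the Eisenstein cocycle $\sum_{\boldsymbol{i},b}p^{\ast}\omega_{\boldsymbol{i}}\otimes E(h;f_{\infty,\boldsymbol{i},b}\otimes f^{\mathcal{W}}_{\tau_1,\tau_2,\mathrm{f}})\otimes n_b$, where the specialization $s_1=(1-k)/2$, $s_2=(1-m)/2$ identifies the algebraic induction of Section \ref{cohomologyinduced} with the normalized induction of Section \ref{section-integral}, so that $E(\cdot)=E(\cdot;f_{\frac{1-k}{2},\frac{1-m}{2}})$.

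Second, I would compute the wedge product and its integration over $\mathcal{C}$. Since $\iota^{\ast}\omega'_j=\omega_j$ for $j\le n(n-1)/2$ and vanishes otherwise, the only nontrivial contributions to $\iota^{\ast}\theta_{\pi,\epsilon}\wedge p^{\ast}[\theta_{\tau_1,\tau_2,\epsilon'}]$ come from disjoint complementary multi-indices $\boldsymbol{i},\boldsymbol{j}$, picking up signs $\mathrm{sgn}(\boldsymbol{i},\boldsymbol{j})$ and pairings $\langle n_b,m_a\rangle$ via the map $\iota^{\ast}M_\mu^\vee\otimes M_\lambda^\vee\to\C$. Integrating the resulting top-degree form along the orientation cycle $\mathcal{C}$ gives
\begin{equation*}
\langle\theta_{\pi,\epsilon},[\theta_{\tau_1,\tau_2,\epsilon'}]\rangle = \sum_{\boldsymbol{i},\boldsymbol{j},a,b}\mathrm{sgn}(\boldsymbol{i},\boldsymbol{j})\langle n_b,m_a\rangle\, I\bigl(\phi_{\boldsymbol{j},a},\,f_{\infty,\boldsymbol{i},b}\otimes f^{\mathcal{W}}_{\tau_1,\tau_2,\mathrm{f}}\bigr),
\end{equation*}
where $I(\cdot,\cdot)$ is the global integral \eqref{eq-globalintegral}. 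The parity condition $\epsilon=(-1)^n\epsilon'$ is exactly what makes the integrand $\pi_0(\mathrm{GL}_{n-1}(\mathbb{R}))$-invariant, ensuring that the integration over $\mathcal{C}$ is nonzero and well defined.

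Third, I would apply the integral representation \eqref{integralrepresentation} to each term on the right-hand side. Since all finite-place data are made explicit (Lemma \ref{ramifiedcomputation} and the Yan--Zhang unramified computation), the finite-place contribution to each $I(\phi_{\boldsymbol{j},a},f_{\infty,\boldsymbol{i},b}\otimes f^{\mathcal{W}}_{\tau_1,\tau_2,\mathrm{f}})$ collapses to $\mathrm{vol}(K_0(\mathfrak{f}))$ times the Rankin--Selberg ratio, with the only remaining factor depending on $\boldsymbol{i},\boldsymbol{j},a,b$ being the archimedean zeta integral $\Psi_\infty(W_{\infty,\boldsymbol{j},a},f_{\infty,\boldsymbol{i},b})$. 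Summing over indices, these archimedean factors assemble into precisely $\langle[\pi_\infty],[\tau_{1,\infty}]\otimes[\tau_{2,\infty}]\rangle=p_\infty(\mu,\mu_1,\mu_2)^{-1}$ by definition \eqref{archimedeanperiod}. Passing from $\theta,[\theta]$ to the normalized classes $\theta^0,[\theta^0]$ introduces the period denominators $p^\epsilon(\pi)p^{\epsilon'}(\tau_1)p^{\epsilon'}(\tau_2)$ and yields the stated formula.

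The main obstacle is the bookkeeping that matches the formal cup/wedge product of cohomology classes with the automorphic integral; in particular, one must check that the Eisenstein-cohomology class, built from the abstract boundary class via $\mathrm{Eis}$ in \eqref{Eis}, really represents the analytically continued Eisenstein series used in Section \ref{section-integral} at the specific point $(s_1,s_2)=((1-k)/2,(1-m)/2)$, without any residual contribution interfering. This is where the balancedness hypothesis on $\mu_1,\mu_2$ and the results of \cite{HarderRaghuram2020book} (generalizing Mahnkopf's $P_{n-2,1}$-case to the general $P_{m,k}$ setting) enter, as already used in the proof of Lemma \ref{taubehavior}. Once this compatibility is in place, the remainder of the proof is essentially formal.
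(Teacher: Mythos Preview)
Your proposal is correct and follows essentially the same approach as the paper's proof: unwind the cohomology classes into explicit Lie algebra cocycles, compute the pairing as a sum of global integrals $I(\phi_{\boldsymbol{j},a},f_{\tau_1,\tau_2,\boldsymbol{i},b})$, apply the integral representation \eqref{integralrepresentation} to factor out the $L$-function ratio and $\mathrm{vol}(K_0(\mathfrak{f}))$, and identify the remaining archimedean sum with $\langle[\pi_\infty],[\tau_{1,\infty}]\otimes[\tau_{2,\infty}]\rangle$. Your discussion of the holomorphy/residue issue for the Eisenstein series at the special point is handled in the paper not inside this proof but in Section \ref{section-mainthm} (via \cite[Proposition 7.10]{HarderRaghuram2020book}), so it is an ambient assumption rather than an obstacle within the argument itself.
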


\begin{proof}
The class $\theta_{\pi,\epsilon}\in H^{b_n}(\mathfrak{g}_n,K_{n,\infty}^0;V_{\pi}\otimes M_{\mu}^{\vee})(\epsilon)$ is represented by a $K_{n,\infty}^0$-invariant element in $\wedge^{b_n}(\mathfrak{g}_n/\mathfrak{k}_{n})\otimes V_{\pi}\otimes M_{\mu}^{\vee}$:
\[
\theta_{\pi,\epsilon}=\sum_{\boldsymbol{j}=j_1<...<j_{b_n}}\sum_a\omega_{\boldsymbol{j}}'\otimes\phi_{\boldsymbol{j},a}\otimes m_a
\]
with $\phi_{\boldsymbol{j},a}\in V_{\pi}$. 

The class $\theta_{\tau_1,\tau_2,\epsilon'}\in H^{b_{n-1}}(\mathfrak{g}_{n-1},K_{n-1,\infty}^0;{^a\mathrm{Ind}}_{P_{m,k}(\mathbb{A})}^{\mathrm{GL}_{n-1}(\mathbb{A})}(V_1\otimes V_2)\otimes M_{\lambda}^{\vee})(\epsilon')$ is represented by a $K_{n-1,\infty}^0$-invariant in $\wedge^{b_{n-1}}(\mathfrak{g}_{n-1}/\mathfrak{k}_{n-1})\otimes {^a\mathrm{Ind}}_{P_{m,k}(\mathbb{A})}^{\mathrm{GL}_{n-1}(\mathbb{A})}(V_1\otimes V_2)\otimes M_{\lambda}^{\vee}$:
\[
\theta_{\tau_1,\tau_2,\epsilon'}=\sum_{\boldsymbol{i}=i_1<...<i_{b_{n-1}}}\sum_b\omega_{\boldsymbol{i}}\otimes f_{\tau_1,\tau_2,\boldsymbol{i},b}\otimes n_b,
\]
with $f_{\tau_1,\tau_2,\boldsymbol{i},b}\in{^a\mathrm{Ind}}_{P_{m,k}(\mathbb{A})}^{\mathrm{GL}_{n-1}(\mathbb{A})}(V_1\otimes V_2)$. Then the class $[\theta_{\tau_1,\tau_2,\epsilon'}]$ is given by
\[
[\theta_{\tau_1,\tau_2,\epsilon'}]=\sum_{\boldsymbol{i}=i_1<...<i_{b_{n-1}}}\sum_b\omega_{\boldsymbol{i}}\otimes E(f_{\tau_1,\tau_2,\boldsymbol{i},b})\otimes n_b,
\]
where $E(f_{\tau_1,\tau_2,\boldsymbol{i},b})$ is defined as in \eqref{Eis}.

We calculate that
\[
\begin{aligned}
&\langle\theta_{\pi,\epsilon},[\theta_{\tau_1,\tau_2,\epsilon'}]\rangle\\
=&\frac{1}{\mathrm{vol}(R_{\mathrm{f}})}\sum_{\boldsymbol{i,j},a,b}\mathrm{sgn}(\boldsymbol{i},\boldsymbol{j})\langle n_b,n_a\rangle\int_{\mathrm{GL}_{n-1}(\Q)\backslash\mathrm{GL}_{n-1}(\mathbb{A})/K^1_{n-1,\infty}R_{\mathrm{f}}}\phi_{\boldsymbol{j},a}(\iota(g))E(f_{\tau_1,\tau_2,\boldsymbol{i},b})(g)dg\\
=&\sum_{\boldsymbol{i,j},a,b}\mathrm{sgn}(\boldsymbol{i},\boldsymbol{j})\langle n_b,n_a\rangle\int_{\mathrm{GL}_{n-1}(\Q)\backslash\mathrm{GL}_{n-1}(\mathbb{A})}\phi_{\boldsymbol{j},a}(\iota(g))E(f_{\tau_1,\tau_2,\boldsymbol{i},b})(g)dg\\
=&\sum_{\boldsymbol{i,j},a,b}\mathrm{sgn}(\boldsymbol{i},\boldsymbol{j})\langle n_b,n_a\rangle\cdot I(\phi_{\boldsymbol{j},a},f_{\tau_1,\tau_2,\boldsymbol{i},b}).
\end{aligned}
\]
The integral representation then provide us
\[
\begin{aligned}
    \langle\theta_{\pi,\epsilon},[\theta_{\tau_1,\tau_2,\epsilon'}]\rangle=\mathrm{vol}(K_0(\mathfrak{f}))\cdot\frac{L^{S}(\frac{1-k}{2},\pi\times\tau_1)L^{S}(\frac{1-m}{2},\widetilde{\pi}\times\widetilde{\tau_2})}{L^{S}(1+\frac{1-n}{2},\tau_1\times\widetilde{\tau_2})}\cdot\langle[\pi_{\infty}],[\tau_{1,\infty}]\otimes[\tau_{2,\infty}]\rangle,
\end{aligned}
\]
which completes the proof of the proposition.
\end{proof}

\section{Special Values of $L$-functions}
\label{section-mainthm}

We now put all the assumptions together and complete the proof of the main theorem.

Let $\pi$ (resp. $\tau_1$, resp. $\tau_2$) be a regular algebraic cuspidal automorphic representation of $\mathrm{GL}_n(\mathbb{A})$ (resp. $\mathrm{GL}_m(\mathbb{A})$, resp. $\mathrm{GL}_k(\mathbb{A})$) with $n=m+k+1$ and $mk$ is even. Then there is a dominant integral pure weight $\mu$ (resp. $\mu_1$, resp. $\mu_2$) such that $\pi\in\mathrm{Coh}(\mathrm{GL}_n,\mu^{\vee})$ (resp. $\tau_1\in\mathrm{Coh}(\mathrm{GL}_m,\mu_1^{\vee})$, resp. $\tau_2\in\mathrm{Coh}(\mathrm{GL}_k,\mu_2^{\vee})$). We assume:
\begin{itemize}
\item{$s=\frac{1-k}{2}$ is a critical value for $L(s,\pi\times\widetilde{\tau_1})$,}
\item{$s=\frac{1-m}{2}$ is a critical value for $L(s,\widetilde{\pi}\times\widetilde{\tau_2})$,}
\item{$s=\frac{1-n}{2}$ and $s=1+\frac{1-n}{2}$ are critical values for $L(s,\tau_1\times\widetilde{\tau_2})$.}
\end{itemize}
The reader can refer to \cite[Proposition 7.7]{HarderRaghuram2020book} for the critical set of the Rankin-Selberg $L$-function and deduce more explicit conditions on $\mu,\mu_1,\mu_2$ for our assumptions. Note that by \cite[Lemma 7.14]{HarderRaghuram2020book}, the third assumption above guarantees that $\mu_1,\mu_2$ are balanced as we assumed in Section \ref{cohomologyinduced}. That is there exists a balanced Kostant representative $w$ such that $\lambda=w^{-1}(\mu_1+\mu_2)$. As in Section \ref{pairing}, we further assume that
\begin{itemize}
\item{$\mu^{\vee}\succ\lambda$.}
\end{itemize}
Moreover by \cite[Proposition 7.10]{HarderRaghuram2020book}, without loss of generality (by exchanging $\tau_1,\tau_2$ and $m,k$ if necessary), we can assume that our Eisenstein series $E(h;f_{s_1,s_2})$ is holomorphic at $s_1=\frac{1-k}{2}$ and $s_2=\frac{1-m}{2}$.

To ease our notation and presentation, we have assumed $\tau_1,\tau_2$ have trivial central characters. This assumption is only for simplicity and they will appear as Gauss sums (see for example \cite[Theorem 1.1]{Raghuram2010}) in our main theorem if we drop this assumption. We fix permissible signs $\epsilon,\epsilon'$ such that $\epsilon=(-1)^n\epsilon'$ and let $p^{\epsilon}(\pi), p^{\epsilon'}(\tau_1), p^{\epsilon'}(\tau_2)$ be periods defined in the Section \ref{cohomologyclass}. Set $p_{\infty}(\mu,\mu_1,\mu_2)$ be the archimedean period as in \eqref{archimedeanperiod}. Denote $S$ for a finite set of places including the archimedean place $\infty$ and all places such that $\pi_v$ or $\tau_{1,v}$ or $\tau_{2,v}$ is ramified. We state our main theorem in the following.

\begin{thm}
\label{maintheorem}
Under all above assumptions, for any $\sigma\in\mathrm{Aut}(\C)$, we have
\begin{equation*}
\begin{aligned}
&\sigma\left(\frac{L^S(\frac{1-k}{2},\pi\times\tau_1)L^S(\frac{1-m}{2},\widetilde{\pi}\times\widetilde{\tau_2})}{L^S(1+\frac{1-n}{2},\tau_1\times\widetilde{\tau_2})p^{\epsilon}(\pi)p^{\epsilon'}(\tau_1)p^{\epsilon'}(\tau_2)p_{\infty}(\mu,\mu_1,\mu_2)}\right)\\
=&\frac{L^S(\frac{1-k}{2},\pi^{\sigma}\times\tau^{\sigma}_1)L^S(\frac{1-m}{2},\widetilde{\pi}^{\sigma}\times\widetilde{\tau_2}^{\sigma})}{L^S(1+\frac{1-n}{2},\tau^{\sigma}_1\times\widetilde{\tau_2}^{\sigma})p^{\epsilon}(\pi^{\sigma})p^{\epsilon'}(\tau^{\sigma}_1)p^{\epsilon'}(\tau^{\sigma}_2)p_{\infty}(\mu,\mu_1,\mu_2)}.
\end{aligned}    
\end{equation*}
In particular,
\begin{equation*}
\frac{L^S(\frac{1-k}{2},\pi\times\tau_1)L^S(\frac{1-m}{2},\widetilde{\pi}\times\widetilde{\tau_2})}{L^S(1+\frac{1-n}{2},\tau_1\times\widetilde{\tau_2})p^{\epsilon}(\pi)p^{\epsilon'}(\tau_1)p^{\epsilon'}(\tau_2)p_{\infty}(\mu,\mu_1,\mu_2)}\in\Q(\pi,\tau_1,\tau_2).
\end{equation*}
\end{thm}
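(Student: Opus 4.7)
The plan is to reduce the Galois equivariance of the $L$-value ratio to the Galois equivariance of a cohomological cup-product pairing, via the main identity of Proposition \ref{mainidentity}. Since the pairing $\langle\cdot,\cdot\rangle_{\mathcal{C}}$ is induced by a $\mathrm{GL}_{n-1}$-equivariant pairing $\iota^{\ast}M_{\mu}^{\vee}\times M_{\lambda}^{\vee}\to \C$ that is defined over $\Q$, and by the cup/wedge product on Betti cohomology (also defined over $\Q$), an element $\sigma\in\mathrm{Aut}(\C)$ acts on both sides compatibly:
\begin{equation*}
\sigma\bigl(\langle\theta^0_{\pi,\epsilon},[\theta^0_{\tau_1,\tau_2,\epsilon'}]\rangle\bigr)=\langle{}^{\sigma}\theta^0_{\pi,\epsilon},{}^{\sigma}[\theta^0_{\tau_1,\tau_2,\epsilon'}]\rangle.
\end{equation*}
Combined with Lemma \ref{pibehavior} and Lemma \ref{taubehavior}, this gives
\begin{equation*}
\sigma\bigl(\langle\theta^0_{\pi,\epsilon},[\theta^0_{\tau_1,\tau_2,\epsilon'}]\rangle\bigr)=\langle\theta^0_{\pi^{\sigma},\epsilon},[\theta^0_{\tau_1^{\sigma},\tau_2^{\sigma},\epsilon'}]\rangle.
\end{equation*}

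Next, one applies Proposition \ref{mainidentity} to read off both sides in terms of $L$-values. On the left, one has
\begin{equation*}
\langle\theta^0_{\pi,\epsilon},[\theta^0_{\tau_1,\tau_2,\epsilon'}]\rangle=\mathrm{vol}(K_0(\mathfrak{f}))\cdot\frac{L^S(\tfrac{1-k}{2},\pi\times\tau_1)L^S(\tfrac{1-m}{2},\widetilde{\pi}\times\widetilde{\tau_2})}{L^S(1+\tfrac{1-n}{2},\tau_1\times\widetilde{\tau_2})\,p^{\epsilon}(\pi)p^{\epsilon'}(\tau_1)p^{\epsilon'}(\tau_2)p_{\infty}(\mu,\mu_1,\mu_2)},
\end{equation*}
and the same identity applies to the triple $(\pi^{\sigma},\tau_1^{\sigma},\tau_2^{\sigma})$ on the right, using that the conductors and hence the level $K_0(\mathfrak{f})$ are preserved under $\sigma$ (so $\mathrm{vol}(K_0(\mathfrak{f}))\in\Q^{\times}$ is the same factor), and that the archimedean weights $\mu,\mu_1,\mu_2$ are unchanged by $\sigma$, so $p_{\infty}(\mu,\mu_1,\mu_2)$ appears identically on both sides. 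Equating these two expressions yields the claimed formula \eqref{maingaloisbehavior}, and the $\Q(\pi,\tau_1,\tau_2)$-rationality follows by taking $\sigma$ in the stabilizer of $\pi_{\mathrm{f}}\otimes\tau_{1,\mathrm{f}}\otimes\tau_{2,\mathrm{f}}$.

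The only subtle points are bookkeeping. First, one should verify that with a fixed $\Q$-rational Haar measure on $\mathrm{GL}_{n-1}(\A_{\mathrm{f}})$, the volume $\mathrm{vol}(K_0(\mathfrak{f}))$ is a nonzero rational number, so it cancels when applying $\sigma$; this is standard for a congruence subgroup. Second, one must check that the map $\pi\mapsto\pi^{\sigma}$ does indeed preserve all hypotheses at infinity (regularity, algebraicity, and balancedness of $(\mu_1,\mu_2)$), which is immediate since these are archimedean conditions fixed by $\sigma$. The main conceptual obstacle, which is what required the work in Section \ref{cohomologyclass}, is the Galois equivariance of the Eisenstein class $[\theta^0_{\tau_1,\tau_2,\epsilon'}]$: the naive pushforward from the boundary could, a priori, land in the part of cuspidal boundary cohomology where Eisenstein summation is not rational, and it is the balancedness assumption on $(\mu_1,\mu_2)$ together with the non-residuality of $E(f^{\mathcal{W}}_{\tau_1,\tau_2})$ that ensure one lies in the component $H^{\bullet}_{\succeq}$ where $\mathrm{Eis}$ commutes with $\sigma$; this is precisely the content of Lemma \ref{taubehavior} and is the crucial input for the above argument to go through.
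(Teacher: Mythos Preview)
Your proof is correct and follows essentially the same approach as the paper's own proof: both argue that the pairing $\langle\cdot,\cdot\rangle_{\mathcal{C}}$ (together with $\iota^{\ast},p^{\ast}$) is $\mathrm{Aut}(\C)$-equivariant, apply Lemmas \ref{pibehavior} and \ref{taubehavior} to identify $^{\sigma}\theta^0_{\pi,\epsilon}$ and $^{\sigma}[\theta^0_{\tau_1,\tau_2,\epsilon'}]$, and then invoke Proposition \ref{mainidentity} together with the rationality of $\mathrm{vol}(K_0(\mathfrak{f}))$. Your additional bookkeeping remarks (invariance of conductors, archimedean weights, and the role of balancedness/non-residuality behind Lemma \ref{taubehavior}) are accurate elaborations but not new ingredients.
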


\begin{proof}
Note that the pairing $\langle\cdot,\cdot\rangle_{\mathcal{C}}$ and $\iota^{\ast},p^{\ast}$ are $\mathrm{Aut}(\C)$-equivariant (see for example \cite[Lemma 1.2]{Mahnkopf2005}). For any $\sigma\in\mathrm{Aut}(\C)$, we have
\[
\sigma(\langle\theta_{\pi,\epsilon}^0,[\theta^0_{\tau_1,\tau_2,\epsilon'}]\rangle)=\langle{^{\sigma}\theta^0_{\pi,\epsilon}},{^{\sigma}[\theta^0_{\tau_1,\tau_2,\epsilon'}]}\rangle=\langle\theta^0_{\pi^{\sigma},\epsilon},[\theta^0_{\tau_1^{\sigma},\tau_2^{\sigma},\epsilon'}]\rangle.
\]
Here the second equality follows from Lemma \ref{pibehavior} and Lemma \ref{taubehavior}. The theorem then follows from the main identity \eqref{mainidentity} and the fact that $\mathrm{vol}(K_0(\mathfrak{f}))\in\Q$.
\end{proof}

The above theorem is stated for the partial $L$-functions. We can also obtain the result of the whole finite $L$-function by applying the technique in \cite{Raghuram2010}.

\begin{cor}
\label{finiteL}
We keep all the assumptions as above. For any $\sigma\in\mathrm{Aut}(\C)$, let $\rho_{\sigma}=\prod_{v}\rho_{\sigma,v}$ be a quadratic character such that $\rho_{\sigma,v}(x)=\frac{\sigma(|x|^{1/2})}{|x|^{1/2}}$ for all finite places $v\in S$ and $\rho_{\sigma,v}=1$ for all $v\notin S$.\\
(1) If both $m,k$ are even, and $n$ is odd, then for any $\sigma\in\mathrm{Aut}(\C)$,
\begin{equation*}
\begin{aligned}
&\sigma\left(\frac{L_{\mathrm{f}}(\frac{1-k}{2},\pi\times\tau_1)L_{\mathrm{f}}(\frac{1-m}{2},\widetilde{\pi}\times\widetilde{\tau_2})}{L_{\mathrm{f}}(1+\frac{1-n}{2},\tau_1\times\widetilde{\tau_2})p^{\epsilon}(\pi)p^{\epsilon'}(\tau_1)p^{\epsilon'}(\tau_2)p_{\infty}(\mu,\mu_1,\mu_2)}\right)\\
=&\frac{L_{\mathrm{f}}(\frac{1-k}{2},\pi^{\sigma}\times\tau^{\sigma}_1)L_{\mathrm{f}}(\frac{1-m}{2},\widetilde{\pi}^{\sigma}\times\widetilde{\tau_2}^{\sigma})}{L_{\mathrm{f}}(1+\frac{1-n}{2},\tau^{\sigma}_1\times\widetilde{\tau_2}^{\sigma}\times\rho_{\sigma})p^{\epsilon}(\pi^{\sigma})p^{\epsilon'}(\tau^{\sigma}_1)p^{\epsilon'}(\tau^{\sigma}_2)p_{\infty}(\mu,\mu_1,\mu_2)}.
\end{aligned}    
\end{equation*}
(2) If $k$ is even, $m$ is odd and $n$ is even (the case $k$ odd and $m$ even is similar), then for any $\sigma\in\mathrm{Aut}(\C)$,
\begin{equation*}
\begin{aligned}
&\sigma\left(\frac{L_{\mathrm{f}}(\frac{1-k}{2},\pi\times\tau_1)L_{\mathrm{f}}(\frac{1-m}{2},\widetilde{\pi}\times\widetilde{\tau_2})}{L_{\mathrm{f}}(1+\frac{1-n}{2},\tau_1\times\widetilde{\tau_2})p^{\epsilon}(\pi)p^{\epsilon'}(\tau_1)p^{\epsilon'}(\tau_2)p_{\infty}(\mu,\mu_1,\mu_2)}\right)\\
=&\frac{L_{\mathrm{f}}(\frac{1-k}{2},\pi^{\sigma}\times\tau^{\sigma}_1)L_{\mathrm{f}}(\frac{1-m}{2},\widetilde{\pi}^{\sigma}\times\widetilde{\tau_2}^{\sigma}\times\rho_{\sigma})}{L_{\mathrm{f}}(1+\frac{1-n}{2},\tau^{\sigma}_1\times\widetilde{\tau_2}^{\sigma})p^{\epsilon}(\pi^{\sigma})p^{\epsilon'}(\tau^{\sigma}_1)p^{\epsilon'}(\tau^{\sigma}_2)p_{\infty}(\mu,\mu_1,\mu_2)}.
\end{aligned}    
\end{equation*}
\end{cor}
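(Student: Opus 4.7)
The plan is to derive Corollary \ref{finiteL} from Theorem \ref{maintheorem} by multiplying both sides by the missing local $L$-factor products $\prod_{v \in S,\, v \neq \infty} L_v(s_0, \Pi_v)$ at each of the three critical points, and tracking how these local factors behave under $\sigma \in \mathrm{Aut}(\C)$. Concretely, since $L_{\mathrm{f}}(s_0, \Pi) = L^S(s_0, \Pi) \cdot \prod_{v \in S,\, v \neq \infty} L_v(s_0, \Pi_v)$, the corollary is equivalent to a Galois transformation law for the finite product of local factors.

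The key technical input is the local Galois transformation rule of \cite[Proposition 3.17]{Raghuram2010}: for a representation $\Pi_v$ of $\mathrm{GL}_d(\Q_v)$ and a critical value $s_0 \in \frac{1}{2}\Z$,
\begin{equation*}
\sigma(L_v(s_0, \Pi_v)) = L_v(s_0, \Pi_v^\sigma \otimes \rho_{\sigma,v}^{a(d, s_0)}),
\end{equation*}
for some $a(d, s_0) \in \{0, 1\}$ determined by the parities of $d$ and $2s_0$. Here $\rho_{\sigma,v}$ captures the Galois action on the $|\cdot|_v^{1/2}$-factors implicit in the $L$-factor via the unitary normalization. I would apply this rule separately to each of the three local factors $L_v(\frac{1-k}{2}, \pi_v \times \tau_{1,v})$, $L_v(\frac{1-m}{2}, \widetilde{\pi}_v \times \widetilde{\tau}_{2,v})$, and $L_v(1 + \frac{1-n}{2}, \tau_{1,v} \times \widetilde{\tau}_{2,v})$, whose corresponding ranks are $nm$, $nk$, and $mk$ respectively.

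The final step is a parity case analysis that identifies where the cumulative twist lands. In case (1) with $m, k$ even and $n$ odd, the shifts $\frac{1-k}{2}$ and $\frac{1-m}{2}$ are half-integers while $1 + \frac{1-n}{2}$ is an integer; the interplay of these parities with the ranks $nm$, $nk$, $mk$ forces the net $\rho_\sigma$-twist onto the third $L$-function in the denominator. In case (2) with $k$ even, $m$ odd, and $n$ even, the analogous analysis shifts the cumulative twist onto the second $L$-function in the numerator. Multiplying the identity of Theorem \ref{maintheorem} through by the product of local factor transformations on both sides and simplifying then yields the two claimed identities.

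The main obstacle is the combinatorial bookkeeping of how the individual twists $\rho_{\sigma,v}^{a(d, s_0)}$ combine across all three $L$-factors and across all ramified places $v \in S \setminus \{\infty\}$, together with verifying that the cumulative twist lands in precisely the position indicated in the corollary. This mirrors the analysis in \cite[Section 3.4]{Raghuram2010} and relies on our assumption that $\tau_1$ and $\tau_2$ have trivial central characters, which suppresses the Gauss sum corrections that would otherwise appear alongside $\rho_\sigma$.
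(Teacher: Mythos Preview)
Your approach is essentially the paper's: both pass from Theorem~\ref{maintheorem} to the finite $L$-function by inserting the local factors at $v\in S\setminus\{\infty\}$ and controlling their behavior under $\sigma$ via \cite[Proposition~3.17]{Raghuram2010}, followed by a parity check. The one sharpening in the paper is that it quotes the explicit identity \cite[(3.20)]{Raghuram2010},
\[
(\pi_{1,v}\boxtimes\pi_{2,v})^{\sigma}=(\pi_{1,v}^{\sigma}\boxtimes\pi_{2,v}^{\sigma})\otimes\rho_{\sigma,v}^{(1-m_1)(1-m_2)},
\]
so the twist exponent depends only on the parities of the two group sizes $m_1,m_2$ and not on $s_0$ or the product rank $d=m_1m_2$; the twist survives exactly when both $m_1,m_2$ are even, which immediately isolates $\tau_1\times\widetilde{\tau_2}$ in case~(1) and $\widetilde{\pi}\times\widetilde{\tau_2}$ in case~(2). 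Your packaging via an exponent $a(d,s_0)$ determined by the parities of $d$ and $2s_0$ happens to yield the same answers here thanks to the constraints $n=m+k+1$ and $mk$ even, but note that in general $d=m_1m_2$ alone cannot recover the parity of $(1-m_1)(1-m_2)$, so the formula from \cite[(3.20)]{Raghuram2010} is the cleaner statement to invoke.
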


\begin{proof}
The corollary follows by applying \cite[Proposition 3.17]{Raghuram2010}. We note that the quadratic character $\rho_{\sigma}$ appears in the identity \cite[(3.20)]{Raghuram2010}
\[
(\pi_{1,v}\boxtimes\pi_{2,v})^{\sigma}=(\pi_{1,v}^{\sigma}\boxtimes\pi_{2,v}^{\sigma})\otimes\rho_{\sigma,v}^{(1-m_1)(1-m_2)}
\]
where $\pi_{1,v}$ (resp. $\pi_{2,v}$) is an irreducible admissible representation of $\mathrm{GL}_{m_1}(\Q_v)$ (resp. $\mathrm{GL}_{m_2}(\Q_v)$). Especially, the character $\rho_{\sigma,v}$ will be cancelled if one of $m_1,m_2$ is odd.
\end{proof}

Using the period relations in \cite[Theorem 1.1]{RaghuramShahidi2008}, we may also obtain results for twisted $L$-functions and other critical values. For example, 

\begin{cor}
\label{othercriticalvalues}
Assume our above assumptions on weights hold for $\mu_1,\mu_2$ replaced by $\mu_1'=\mu_1+k,\mu_2'=\mu_2-m$. We change our assumptions on critical points according to following two cases.\\
(1) For $m,k$ even and $n$ odd, we assume:
\begin{itemize}
\item{$s=\frac{1}{2}$ is a critical value for $L(s,\pi\times\widetilde{\tau_1})$,}
\item{$s=\frac{1}{2}$ is a critical value for $L(s,\widetilde{\pi}\times\widetilde{\tau_2})$,}
\item{$s=0$ and $s=1$ are critical values for $L(s,\tau_1\times\widetilde{\tau_2})$.}
\end{itemize}
Then for any $\sigma\in\mathrm{Aut}(\C)$,
\begin{equation*}
    \begin{aligned}
&\sigma\left(\frac{L^S(\frac{1}{2},\pi\times\tau_1)L^S(\frac{1}{2},\widetilde{\pi}\times\widetilde{\tau_2})}{L^S(1,\tau_1\times\widetilde{\tau_2})p^{\epsilon}(\pi)p^{\epsilon'}(\tau_1)p^{\epsilon'}(\tau_2)p_{\infty}(\mu,\mu'_1,\mu'_2)}\right)\\
=&\frac{L^S(\frac{1}{2},\pi^{\sigma}\times\tau^{\sigma}_1)L^S(\frac{1}{2},\widetilde{\pi}^{\sigma}\times\widetilde{\tau_2}^{\sigma})}{L^S(1,\tau^{\sigma}_1\times\widetilde{\tau_2}^{\sigma})p^{\epsilon}(\pi^{\sigma})p^{\epsilon'}(\tau^{\sigma}_1)p^{\epsilon'}(\tau^{\sigma}_2)p_{\infty}(\mu,\mu'_1,\mu'_2)}.
\end{aligned}    
\end{equation*}
(2) For $k$ even, $m$ odd and $n$ even (the case $k$ odd and $m$ even is similar), we assume:
\begin{itemize}
\item{$s=\frac{1}{2}$ is a critical value for $L(s,\pi\times\widetilde{\tau_1})$,}
\item{$s=0$ is a critical value for $L(s,\widetilde{\pi}\times\widetilde{\tau_2})$,}
\item{$s=-\frac{1}{2}$ and $s=\frac{1}{2}$ are critical values for $L(s,\tau_1\times\widetilde{\tau_2})$.}
\end{itemize}
Then for any $\sigma\in\mathrm{Aut}(\C)$,
\begin{equation*}
    \begin{aligned}
&\sigma\left(\frac{L^S(\frac{1}{2},\pi\times\tau_1)L^S(0,\widetilde{\pi}\times\widetilde{\tau_2})}{L^S(\frac{1}{2},\tau_1\times\widetilde{\tau_2})p^{\epsilon}(\pi)p^{\epsilon'}(\tau_1)p^{-\epsilon'}(\tau_2)p_{\infty}(\mu,\mu'_1,\mu'_2)}\right)\\
=&\frac{L^S(\frac{1}{2},\pi^{\sigma}\times\tau^{\sigma}_1)L^S(0,\widetilde{\pi}^{\sigma}\times\widetilde{\tau_2}^{\sigma})}{L^S(\frac{1}{2},\tau^{\sigma}_1\times\widetilde{\tau_2}^{\sigma})p^{\epsilon}(\pi^{\sigma})p^{\epsilon'}(\tau^{\sigma}_1)p^{-\epsilon'}(\tau^{\sigma}_2)p_{\infty}(\mu,\mu'_1,\mu'_2)}.
\end{aligned}    
\end{equation*}
\end{cor}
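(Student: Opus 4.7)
The plan is to derive Corollary \ref{othercriticalvalues} from Theorem \ref{maintheorem} by applying the latter not to $(\pi,\tau_1,\tau_2)$ directly, but to a triple $(\pi, \tau_1 \otimes |\det|^{a}, \tau_2 \otimes |\det|^{b})$ for carefully chosen exponents $a,b$, and then using the period relations of Raghuram--Shahidi \cite[Theorem 1.1]{RaghuramShahidi2008} to exchange periods of twists for periods of the original representations. The guiding identity is the shift
\[
L^S\bigl(s,\pi\times(\tau_1\otimes|\det|^a)\bigr)=L^S(s+a,\pi\times\tau_1),
\]
together with the analogous formulas for $\widetilde\pi\times\widetilde{\tau_2}$ and $\tau_1\times\widetilde{\tau_2}$. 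These allow one to translate the main theorem's triple of critical values $(\tfrac{1-k}{2},\tfrac{1-m}{2},1+\tfrac{1-n}{2})$ to the triple appearing in the corollary.

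For case (1), where both $m$ and $k$ are even, I would choose $a=k/2$ and $b=-m/2$, so that each twist remains an algebraic representation, and compute that these exponents send the main theorem's three critical values to $(1/2,1/2,1)$, using $n=m+k+1$. The cohomological weights get shifted to $\mu_1'$ and $\mu_2'$ (giving the archimedean period $p_\infty(\mu,\mu_1',\mu_2')$ in the conclusion), and the permissible signs $\epsilon'$ are preserved because the character $|\det|^{k/2}$ has trivial sign at infinity on $\mathrm{GL}_m$ (since $k/2$ times $m$ is even), and similarly for $\tau_2$. The period relations of \cite{RaghuramShahidi2008} then identify $p^{\epsilon'}(\tau_1\otimes|\det|^{k/2})$ with $p^{\epsilon'}(\tau_1)$ and $p^{\epsilon'}(\tau_2\otimes|\det|^{-m/2})$ with $p^{\epsilon'}(\tau_2)$ up to factors that are $\mathrm{Aut}(\C)$-equivariantly rational, so that the $\mathrm{Aut}(\C)$-equivariance in Theorem \ref{maintheorem} descends to the desired equivariance statement.

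For case (2), with $k$ even and $m$ odd, the correct exponents are $a=k/2$ (still an integer) and $b=(1-m)/2$ (also an integer, since $m$ is odd). One checks $1+\tfrac{1-n}{2}+a-b=1/2$ as required, and that $\tfrac{1-k}{2}+a=1/2$ and $\tfrac{1-m}{2}-b=0$. The new feature is the sign: the twist $|\det|^{(1-m)/2}$ on $\mathrm{GL}_k$ has sign $(-1)^{(1-m)k/2}$ at infinity; since $k$ is even and $(1-m)/2$ has definite parity governed by the central character of $\tau_2$, the permissible-sign condition $\epsilon=(-1)^n\epsilon'$ forces the relevant period of the twist to be $p^{-\epsilon'}(\tau_2)$ rather than $p^{\epsilon'}(\tau_2)$. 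This is exactly why the corollary's denominator carries $p^{-\epsilon'}(\tau_2)$ in case (2).

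The main obstacle will be the careful sign and Gauss-sum bookkeeping when invoking \cite[Theorem 1.1]{RaghuramShahidi2008}: one must verify that any Gauss-sum factors arising from twisting (especially in case (2)) are compatible with $\mathrm{Aut}(\C)$ on both sides of the claimed equality, and that the shift of the cohomological weight $\mu_i\mapsto\mu_i'$ is matched by the shift of the archimedean period $p_\infty(\mu,\mu_1,\mu_2)\mapsto p_\infty(\mu,\mu_1',\mu_2')$ so that the archimedean contribution is absorbed into the denominator on both sides. Once these compatibilities are pinned down, the corollary is a direct substitution into Theorem \ref{maintheorem}, and the ``in particular'' style rationality statement follows from the $\mathrm{Aut}(\C)$-equivariance exactly as in the proof of Theorem \ref{maintheorem}.
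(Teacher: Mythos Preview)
Your overall strategy matches the paper's exactly: apply Theorem \ref{maintheorem} to $(\pi,\tau_1\otimes|\cdot|^a,\tau_2\otimes|\cdot|^b)$ for suitable integers $a,b$, then invoke the period relations of \cite{RaghuramShahidi2008}. The paper's proof records the twists as $\tau_1|\cdot|^k$ and $\tau_2|\cdot|^{-m}$ (consistent with the weight shifts $\mu_1'=\mu_1+k$, $\mu_2'=\mu_2-m$ announced in the statement), whereas you take the half-exponents $a=k/2$, $b=-m/2$ in case (1) and $a=k/2$, $b=(1-m)/2$ in case (2). Your exponents are the ones that actually produce the $L$-value shifts $\tfrac{1-k}{2}\mapsto\tfrac12$, $\tfrac{1-m}{2}\mapsto\tfrac12$ (resp.\ $0$), $1+\tfrac{1-n}{2}\mapsto 1$ (resp.\ $\tfrac12$), so this discrepancy with the paper is presumably a typo on their side rather than a genuine difference of method.

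That said, your sign bookkeeping is not correct. The relevant period relation is $p^{\epsilon'}(\tau\otimes|\cdot|^a)=p^{(-1)^a\epsilon'}(\tau)$: the sign flips by $(-1)^a$, because $|\det|^a\cdot\det^{-a}=\mathrm{sgn}(\det)^a$ acts by $(-1)^a$ on the nontrivial component of $K_{m,\infty}/K_{m,\infty}^0$. With your exponent $a=k/2$ this gives a sign change of $(-1)^{k/2}$, which need not be trivial even though $k$ is even; your justification ``since $k/2$ times $m$ is even'' is not the right criterion---the flip depends only on the parity of the twisting exponent, not on $m$. This is precisely why the paper's exponent $k$ gives the clean sign change $(-1)^k=1$ and why twisting $\tau_2$ by $|\cdot|^{-m}$ gives $(-1)^m$, yielding $p^{\epsilon'}(\tau_2)$ in case (1) and $p^{-\epsilon'}(\tau_2)$ in case (2). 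You should reconcile the $L$-value shift (which forces the half-exponents) with the period-sign computation (which, as written, does not land on the signs asserted in the corollary); as it stands either the signs in your conclusion or the exponents need adjusting, and this should be pinned down before the argument is complete.
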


\begin{proof}
We apply Theorem \ref{maintheorem} for $\tau_1,\tau_2$ replaced by $\tau_1|\cdot|^k,\tau_2|\cdot|^{-m}$. This change the weight $\mu_1,\mu_2$ to $\mu_1',\mu_2'$. The period relation \cite[Theorem 1.1]{RaghuramShahidi2008} implies
\[
p^{\epsilon'}(\tau_1|\cdot|^k)=p^{(-1)^k\epsilon'}(\tau_1),\qquad p^{\epsilon'}(\tau_2|\cdot|^k)=p^{(-1)^m\epsilon'}(\tau_2)
\]
which completes the proof of the corollary.
\end{proof}

We can also obtain the similar result for the whole finite $L$-function as in Corollary \ref{finiteL}. We finally remark that in the case $k$ even, $m$ odd and $n$ even, if we set
\begin{equation}
\label{inductionperiod}
\Omega(\pi,\tau_1,\tau_2)=\frac{L^S(0,\widetilde{\pi}\times\widetilde{\tau_2})}{p^{\epsilon}(\pi)p^{\epsilon'}(\tau_1)p^{-\epsilon'}(\tau_2)p_{\infty}(\mu,\mu'_1,\mu'_2)},
\end{equation}
then we obtain an algebraicity result for
\begin{equation*}
    \frac{1}{\Omega(\pi,\tau_1,\tau_2)}\cdot\frac{L^S(\frac{1}{2},\pi\times\tau_1)}{L^S(\frac{1}{2},\tau_1\times\widetilde{\tau_2})}.
\end{equation*}
Note that here $L^S(0,\widetilde{\pi}\times\widetilde{\tau_2})$ is nonzero by \cite{JS}. If we apply the induction process as in \cite{Mahnkopf2005} (starting from $\mathrm{GL}_{m-1}\times\mathrm{GL}_m$ and $\mathrm{GL}_{m+1}\times\mathrm{GL}_m$), then we can obtain an algebraicity result for $\mathrm{GL}_n\times\mathrm{GL}_m$ where $m$ is any odd integer and $n$ of the form $m-1+l(m+1)$ or $m+1+l(m+1)$ with $l$ any non-negative integer. In particular, when $m=1$ or $m=3$ we can take $n$ to be any even integer.

\bibliographystyle{alpha}
\bibliography{References}

\end{document}